\def\abs#1{\left|#1\right|}
\def\norm#1{\|#1\|}
\def\m{\mbox{\boldmath $m$}}
\def\e{\mbox{\boldmath $e$}}
\def\f{\mbox{\boldmath $f$}}
\def\g{\mbox{\boldmath $g$}}
\def\x{\mbox{\boldmath $x$}}
\def\vv{\mbox{\boldmath $v$}}
\def\0{\mbox{\boldmath $0$}}
\def\um{\underline{\bm m}}
\def\mmu{\mbox{\boldmath $\mu$}}
\newtheorem{thm}{Theorem}[section]
\newtheorem{lem}{Lemma}[section]
\newtheorem{algorithm}{Algorithm}[section]
\newtheorem{defn}{Definition}[section]
\theoremstyle{definition}
\theoremstyle{remark}
\numberwithin{equation}{section}
\begin{document}

\title[Convergence analysis of an implicit FDM for iLLG equation]{Convergence analysis of an implicit finite difference method for the inertial Landau-Lifshitz-Gilbert equation}

\author{Jingrun Chen}
\address{{School of Mathematical Sciences and Suzhou Institute for Advanced Research, University of Science and Technology of China, China\newline\indent
Suzhou Institute for Advanced Research, University of Science and Technology of China, Suzhou, Jiangsu 215123, China}}
\email{jingrunchen@ustc.edu.cn}
\thanks{}

\author{Panchi Li}
\address{School of Mathematical Sciences, Soochow University, Suzhou, 215006, China}
\email{lipanchi1994@163.com}
%
\author{Cheng Wang}
\address{Mathematics Department, University of Massachusetts, North Dartmouth, MA 02747, USA}
\email{cwang1@umassd.edu}
\thanks{}

\graphicspath{{implicit_figures/}}

\subjclass[2010]{Primary 35K61, 65M06, 65M12.}

\date{\today}

\dedicatory{}
\keywords{Convergence analysis, inertial Landau-Lifshitz-Gilbert equation, implicit central difference scheme, second order accuracy.}
\begin{abstract}
The Landau-Lifshitz-Gilbert (LLG) equation is a widely used model for fast magnetization dynamics in ferromagnetic materials. Recently, the inertial LLG equation, which contains an inertial term, has been proposed to capture the ultra-fast magnetization dynamics at the sub-picosecond timescale. Mathematically, this generalized model contains the first temporal derivative and a newly introduced second temporal derivative of magnetization. Consequently, it produces extra difficulties in numerical analysis due to the mixed hyperbolic-parabolic type of this equation with degeneracy. In this work, we propose an implicit finite difference scheme based on the central difference in both time and space. A fixed point iteration method is applied to solve the implicit nonlinear system. With the help of a second order accurate constructed solution, we provide a convergence analysis in $H^1$ for this numerical scheme, in the $\ell^\infty (0, T; H_h^1)$ norm. It is shown that the proposed method is second order accurate in both time and space, with unconditional stability and a natural preservation of the magnetization length. In the hyperbolic regime, significant damping wave behaviors of magnetization at a shorter timescale are observed through numerical simulations.
\end{abstract}

\maketitle


\section{Introduction}
The Landau-Lifshitz-Gilbert (LLG) equation~\cite{Gilbert1955, LandauLifshitz1935} describes the dissipative magnetization dynamics in ferromagnetic materials, which is highly nonlinear and has a non-convex constraint. Physically, it is widely used to interpret the experimental observations. However, recent experiments~\cite{PhysRevLett1996, NatComm2011_experiment, NatComm2010Ultrafastdemagnetization} confirm that its validity is limited to timescales from picosecond to larger timescales for which the angular momentum reaches equilibrium in a force field. At shorter timescales, e.g. $\sim 100\;\mathrm{fs}$, the ultra-fast magnetization dynamics has been observed~\cite{NatComm2010Ultrafastdemagnetization}. To account for this, the inertial Landau-Lifshitz-Gilbert (iLLG) equation is proposed~\cite{PhysRevLett057204, PhysRevB020410, PhysRevB172403}. As a result, the magnetization converges to its equilibrium along a locus with damping nutation simulated in~\cite{LI2021semi-implicitiLLG}, when the inertial effect is activated by a non-equilibrium initialization or an external magnetic field.

For a ferromagnet over $\Omega\in\mathds{R}^d, d = 1,2,3$, the observable states are depicted by the distribution of the magnetization in $\Omega$. The magnetization denoted by $\m(\x, t)$ is a vector field taking values in the unit sphere $\mathds{S}^2$ of $\mathds{R}^3$, which indicates that $|\m| = 1$ in a point-wise sense. In micromagnetics, the evolution of $\m$ is governed by the LLG equation. In addition to experiment and theory, micromagnetics simulations have become increasingly important over the past several decades. Therefore, numerous numerical approaches have been proposed for the LLG equation and its equivalent form, the Landau-Lifshitz (LL) equation; see~\cite{cimrak2007survey, SIAMRev2006DevelopmentLLG} for reviews and references therein. In terms of time marching, the simplest explicit methods, such as the forward Euler method and Runge-Kutta methods, were favored in the early days, while small time step size must be used due to the stability restriction~\cite{ROMEO2008464}. Of course, implicit methods avoid the stability constraint and these methods produce the approximate solution in $H^1(\Omega)$~\cite{ALOUGES2006femMMMAS, ALOUGES20121345}. However, in order to guarantee the convergence of the schemes, a step-size condition $k = O(h^2)$ must be satisfied in both the theoretical analysis and numerical simulations. To obtain the weak solution in the finite element framework, an intermediate variable $\vv$ with the definition $\vv = \partial_t\m$ representing the increment rate at current time is introduced, and to solve $\vv$ in the tangent space of $\m$ where it satisfies $\vv\cdot\m = 0$ in a point-wise sense, then the configuration at the next time step can be obtained. Directly, the strong solution can be obtained through solving the implicit mid-point scheme~\cite{bartels2006convergence} and the implicit backward Euler scheme~\cite{fuwa2012finite} using fixed-point iteration methods. By contrast, the semi-implicit methods have achieved a desired balance between stability and efficiency for the micromagnetics simulations. The Gauss-Seidel projection methods~\cite{E2000analysisProjection, LI2020109046, wang2001gauss}, the linearized backward Euler scheme~\cite{Ivan2005semiimpliictEulerAnalysis,gao2014optimal}, the Crank-Nicolson projection scheme~\cite{An2021analysisCN}, and the second order semi-implicit backward differentiation formula projection scheme~\cite{CHEN202155,Xie2020} have been developed in recent years. In practice, all these semi-implicit methods inherit the unconditional stability of implicit schemes, and achieve the considerable improvement in efficiency.

The LLG equation is a nonlinear parabolic system which consists of the gyromagnetic term and the damping term. It is a classical kinetic equation that only contains the velocity; no acceleration is included in the equation. When relaxing the system from a non-equilibrium state or applying a perturbation, it is natural that an acceleration term will be present, resulting in the inertial term in the iLLG equation. More specifically, the time evolution of $\m(\x, t)$ is described by $\partial_t\m$ and $\m\times\partial_t\m$ with the addition of an inertial term $\m\times\partial_{tt}\m$. Thus, the iLLG equation is a nonlinear system of mixed hyperbolic-parabolic type with degeneracy. To numerically study the hyperbolic behaviors of the magnetization, the first-order accuracy tangent plane scheme (TPS) and the second-order accuracy angular momentum method (AMM) are proposed in~\cite{Ruggeri2021iLLGanalysis}. The fixed-point iteration method is used for the implicit marching. These two methods aim to find the weak solution. Furthermore, a second-order accurate semi-implicit method is presented in~\cite{LI2021semi-implicitiLLG}, and $\partial_{tt}\m$ and $\partial_t\m$ are approximated by the central difference.

In this work, we provide the convergence analysis of the implicit mid-point scheme on three time layers for the iLLG equation. Subject to the condition $k \leq Ch^2$, it produces a unique second-order approximation in $H^1({\Omega_T})$. Owing to the application of the mid-point scheme, it naturally preserves the magnetization length. Moreover, we propose a fixed-point iteration method to solve the nonlinear scheme, which converges to a unique solution under the condition of $k \leq Ch^2$. Numerical simulations are reported to confirm the theoretic analysis and study the inertial dynamics at shorter timescales.

The rest of this paper is organized as follows. The iLLG equation and the numerical method are introduced in Section \ref{sec:model}. The detailed convergence analysis is provided in Section \ref{sec:analysis}. In addition, a fixed-point iteration method for solving the implicit scheme is proposed in Section \ref{sec:solving}, and the convergence is established upon the condition $k \le Ch^2$. Numerical tests, including the accuracy test and observation of the inertial effect, are presented in Section \ref{sec:experiments}. Concluding remarks are made in Section \ref{sec:conclusion}.

\section{The physical model and the numerical method}
\label{sec:model}

The intrinsic magnetization of a ferromagnetic body $\m = \m(\x, t):\Omega_T := \Omega\times(0, T)\rightarrow\mathds{S}^2$ is modeled by the conventional LLG equation:
\begin{subequations}
  \begin{align}
    &\partial_t\m = -\m\times\Delta\m + \alpha\m\times\partial_t\m, &(\x, t)\in\Omega_T,\\
    &\m(\x, 0) = \m(0), &\x\in\Omega,\\
    &\partial_{\nu}\m(\x, t) = 0, &(\x,t)\in\partial\Omega\times[0, T],
  \end{align}
\end{subequations}
where $\nu$ represents the unit outward normal vector on $\partial\Omega$, and $\alpha \ll 1$ is the damping parameter. If the relaxation starts from a non-equilibrium state or a sudden perturbation is applied, the acceleration should be considered in the kinetic equation, which is the inertial effect observed in various experiments at the sub-picosecond timescale. In turn, its dynamics is described by the iLLG equation
\begin{subequations}\label{equ:iLL-equation}
  \begin{align}
    &\partial_t\m = -\m\times(\Delta\m + \mathbf{H}_{\mathrm{e}}) + \alpha\m\times\left(\partial_t\m + \tau\partial_{tt}\m\right), &(\x, t)\in\Omega_T,\label{subequ:iLLG-equ}\\
    &\m(\x, 0) = \m(0), &\x\in\Omega,\label{subequ:initial-condition-I}\\
    &\partial_t\m(\x, 0) = 0, &\x\in\Omega,\label{subequ:initial-condition-II}\\
    &\partial_{\nu}\m(\x, t) = 0, &(\x,t)\in\partial\Omega\times[0, T],\label{subequ:boundary-condition}
  \end{align}
\end{subequations}
where $\tau$ is the phenomenological inertia parameter, and $\mathbf{H}_{\mathrm{e}}$ is a perturbation of an applied magnetic field. To ease the discussion, the external field is neglected in the subsequent analysis and is only considered in micromagnetics simulations. An additional initial condition $\partial_t\m(\x, 0) = 0$ is added, which implies that the velocity is $0$ at $t = 0$ and it is a necessary condition for the well-posedness. Then the energy is defined as
\begin{equation}
  \label{energy}
  \mathcal{E}[\m] = \frac 1{2}\int_\Omega\left( |\nabla\m|^2 - 2\m\cdot\mathbf{H}_{\mathrm{e}} + \alpha\tau\abs{\partial_t\m}^2\right)\mathrm{d}\x.
\end{equation}
For constant external magnetic fields, it satisfies the energy dissipation law
\begin{equation}
  \frac{d}{dt}\mathcal{E}[\m] = -\alpha\int_{\Omega}|\partial_t\m|^2\mathrm{d}\x \leq 0.
  \label{equ:continuous-energy-law}
\end{equation}
Therefore, under the condition of \eqref{subequ:initial-condition-II}, for almost all $T'\in(0, T)$, we have
\begin{equation}
  \frac 1{2}\int_\Omega\left( |\nabla\m(\x,T')|^2 + \alpha\tau\abs{\partial_t\m(\x,T')}^2\right)\mathrm{d}\x \leq \frac 1{2}\int_\Omega\left( |\nabla\m(\x,0)|^2\right)\mathrm{d}\x.
\end{equation}

Before the formal algorithm is presented, here the spatial difference mesh and the temporal discretization have to be stated. The uniform mesh for $\Omega$ is constructed with mesh-size $h$ and a time step-size $k > 0$ is set. Let $L$ be the set of nodes $\{\x_l = (x_i, y_j, z_k)\}$ in 3-D space with the indices $i = 0, 1, \cdots, nx, nx+1, j = 0, 1, \cdots, ny, ny+1$ and $k = 0, 1, \cdots, nz, nz+1$, and the ghost points on the boundary of $\Omega$ are denoted by $i_x = 0, nx+1$, $j_y = 0, ny+1$ and $k_z = 0, nz+1$. We use the half grid points with $\m_{i,j,k} = \m((i-\frac 1{2})h_x, (j-\frac 1{2})h_y, (k-\frac 1{2})h_z)$. Here $h_x = 1/nx$, $h_y = 1/ny$, $h_z = 1/nz$ and $h = h_x = h_y = h_z$ holds for uniform spatial meshes. Due to the homogeneous Neumann boundary condition \eqref{subequ:boundary-condition}, the following extrapolation formula is derived:
\begin{equation}
  \m_{i_x+1,j,k} = \m_{i_x,j,k}, \m_{i,j_y+1,k} = \m_{i,j_y,k}, \m_{i,j,k_z+1} = \m_{i,j,k_z},
\end{equation}
for any $1 \leq i \leq nx, 1 \leq j \leq ny, 1 \leq k \leq nz$.
Meanwhile, the temporal derivatives are discretized by the central difference, with the details stated in the following definition.
\begin{defn}
  For $\phi^{n+1} = \phi(\x, t_{n+1})$ and $\psi^{n+1} = \psi(t_{n+1})$, define
  \begin{equation*}
    d_t^+\phi^{n} = \frac{\phi^{n+1} - \phi^{n}}{k},\ \ \ d_t^-\phi^{n} = \frac{\phi^{n} - \phi^{n-1}}{k},
  \end{equation*}
  and
  \begin{equation*}
    D_t^+\psi^{n} = \frac{\psi^{n+1} - \psi^n}{k},\ \ \ D_t^-\psi^{n} = \frac{\psi^{n} - \psi^{n-1}}{k}.
  \end{equation*}
  Consequently, we denote
  \begin{equation*}
    d_t\phi^{n+1} = \frac 1{2}(d_t^+\phi^{n} + d_t^-\phi^{n}), \ \ \ D_t\psi^{n+1} = \frac 1{2}(D_t^+\psi^{n} + D_t^-\psi^{n}).
  \end{equation*}
  In particular, the second time derivative is approximated by the central difference form
  \begin{equation}
    d_{tt}\phi = \frac{\phi^{n+1} - 2\phi^{n} + \phi^{n-1}}{k^2}.
  \end{equation}
\end{defn}
Then for the initial condition \eqref{subequ:initial-condition-II}, there holds
\begin{equation}
  \m(\x_l,0) = \m(\x_l, k),\;\forall l\in L,
\end{equation}
where $L =\{(i,j,k)| i=1,\cdots,nx; j=1,\cdots,ny; k=1,\cdots,nz.\}$.
Denote $\m_h^n (n\geq 0)$ as the numerical solution. Given grid functions $\f_h,\g_h\in\ell^2(\Omega_h,\mathds{R}^3)$, we list definitions of the discrete inner product and norms used in this paper.
\begin{defn}{The discrete inner product $\langle\cdot,\cdot\rangle$ in $\ell^2(\Omega_h,\mathds{R}^3)$ is defined by}
\begin{equation}
  \langle\f_h, \g_h\rangle = h^d\sum_{l\in L}\f_h(\x_l)\cdot\g_h(\x_{l}).
\end{equation}
The discrete $\ell^2$ norm and $H^1_h$ norm of $\m_h$ are
\begin{equation}
  \|\f_h\|_2^2 =  h^d\sum_{l\in L}\f_h(\x_l)\cdot\f_h(\x_{l}),
\end{equation}
and
\begin{equation}
  \|\f_h\|_{H^1_h}^2 = \|\f_h\|_2^2 + \|\nabla_h\f_h\|_2^2
\end{equation}
with $\nabla_h$ representing the central difference stencil of the gradient operator.
\end{defn}
Besides, the norm $\|\cdot\|_{\infty}$ in $\ell^{\infty}(\Omega_h,\mathds{R}^3)$ is defined by
\begin{equation}
  \|\f_h\|_{\infty} = \max_{\l\in L}\|\f_h(\x_l)\|_{\infty}.
\end{equation}

Therefore, the approximation scheme of the iLLG equation is presented below.
\begin{algorithm}
  \label{alg:implicit-midpoint-algorithm}
  Given $\m_h^0, \m_h^1\in W^{1,2}(\Omega_h, \mathds{S}^2)$. Let $\m_h^{n-1},\m_h^{n}\in W^{1,2}(\Omega_h, \mathds{S}^2)$, we compute $\m_h^{n+1}$ by
  \begin{equation}
    \label{equ:midpoint-implicit-continuous}
    d_t\m_h^{n+1} - \alpha\bar{\m}_h^n\times\left(d_t\m_h^{n+1} + \tau d_{tt}\m_h^{n} \right) = -\bar{\m}_h^n\times\Delta_h\bar{\m}_h^n,
  \end{equation}
  where $\bar{\m}_h^n = \frac 1{2}(\m_h^{n+1} + \m_h^{n-1})$, and $\Delta_h$ represents the standard seven-point stencil of the Laplacian operator.
\end{algorithm}
The corresponding fully discrete version of the above \eqref{equ:midpoint-implicit-continuous} reads as
\begin{align}
  \label{equ:midpoint-implicit-discrete}
  \frac{\m_h^{n+1} - \m_h^{n-1}}{2k} - \alpha\frac{\m_h^{n+1} + \m_h^{n-1}}{2}\times\left(\frac{\m_h^{n+1} - \m_h^{n-1}}{2k} + \tau\frac{\m_h^{n+1} - 2\m_h^{n} + \m_h^{n-1}}{k^2}\right) \nonumber\\
  = -\frac{\m_h^{n+1} + \m_h^{n-1}}{2}\times\Delta_h\left(\frac{\m_h^{n+1} + \m_h^{n-1}}{2}\right).
\end{align}
Within three time steps, there have not been many direct discretization methods to get the second-order temporal accuracy. 
Due to the mid-point approximation feature, this implicit scheme is excellent in maintaining certain properties of the original system.
\begin{lem}
  Given $\abs{\m_h^0(\x_l)} = 1$, then the sequence $\{\m_h^n(\x_l)\}_{n\geq0}$ produced by \eqref{equ:midpoint-implicit-continuous} satisfies
  \begin{enumerate}[{\rm (i)}]
    \item \label{sublem1-1}$\abs{\m_h^n(\x_l)} = 1, \forall l \in L$;
    \item \label{sublem1-2}$\frac 1{2}D_t\norm{\nabla_h\m_h^{n+1}}_{2}^2 + \alpha\norm{d_t\m_h^{n+1}}_{2}^2 + \frac 1{2}\alpha\tau D_t^-\norm{d_t^+\m_h^{n}}_{2}^2 = 0$.
  \end{enumerate}
  \label{lem:lem2.1}
\end{lem}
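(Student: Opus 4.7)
For (i), every term in \eqref{equ:midpoint-implicit-continuous} except the leading $d_t\m_h^{n+1}$ has the form $\bar\m_h^n\times(\cdot)$, so taking the pointwise Euclidean dot product with $\bar\m_h^n=\tfrac12(\m_h^{n+1}+\m_h^{n-1})$ annihilates all of them and leaves the scalar relation $\bar\m_h^n(\x_l)\cdot d_t\m_h^{n+1}(\x_l)=0$ at each node. Expanding both factors immediately turns this into $\abs{\m_h^{n+1}(\x_l)}^2=\abs{\m_h^{n-1}(\x_l)}^2$. Starting from the hypothesis $\abs{\m_h^0(\x_l)}=1$ together with $\abs{\m_h^1(\x_l)}=1$ (forced by the discrete form $\m_h^1=\m_h^0$ of the auxiliary initial condition \eqref{subequ:initial-condition-II}), a parity induction in $n$ delivers (i).

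For (ii), the plan is to test \eqref{equ:midpoint-implicit-continuous} against $-\Delta_h\bar\m_h^n$ in the discrete inner product. The right-hand side vanishes because $\bar\m_h^n\times\Delta_h\bar\m_h^n$ is pointwise orthogonal to $\Delta_h\bar\m_h^n$. A discrete summation by parts, justified by the homogeneous Neumann boundary condition via the ghost-point extrapolation, rewrites the leading term as $\langle\nabla_h d_t\m_h^{n+1},\nabla_h\bar\m_h^n\rangle$. Since $\nabla_h d_t\m_h^{n+1}$ is the centered time difference and $\nabla_h\bar\m_h^n$ the arithmetic mean of $\nabla_h\m_h^{n\pm1}$, the identity $\langle a-b,a+b\rangle=\norm{a}_2^2-\norm{b}_2^2$ collapses this to $\tfrac12\,D_t\norm{\nabla_h\m_h^{n+1}}_2^2$.

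The dissipative and inertial contributions come from $-\alpha\,\langle\bar\m_h^n\times(d_t\m_h^{n+1}+\tau d_{tt}\m_h^n),\Delta_h\bar\m_h^n\rangle$. Applying the cyclic scalar triple product identity $\langle\bar\m_h^n\times\vv,\Delta_h\bar\m_h^n\rangle=-\langle\vv,\bar\m_h^n\times\Delta_h\bar\m_h^n\rangle$ and then substituting $\bar\m_h^n\times\Delta_h\bar\m_h^n$ from \eqref{equ:midpoint-implicit-continuous} itself, one further triple-product term vanishes by antisymmetry and the surviving piece reduces cleanly to $\alpha\norm{d_t\m_h^{n+1}}_2^2+\alpha\tau\,\langle d_{tt}\m_h^n,d_t\m_h^{n+1}\rangle$.

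The main algebraic step, where I expect the most careful bookkeeping, is recognizing the residual inertial inner product as a discrete time derivative of a squared norm. Writing $d_t\m_h^{n+1}=\tfrac12(d_t^+\m_h^n+d_t^-\m_h^n)$ and $d_{tt}\m_h^n=\tfrac1k(d_t^+\m_h^n-d_t^-\m_h^n)$ and using $d_t^-\m_h^n=d_t^+\m_h^{n-1}$, a single difference-of-squares computation yields $\langle d_{tt}\m_h^n,d_t\m_h^{n+1}\rangle=\tfrac{1}{2k}\bigl(\norm{d_t^+\m_h^n}_2^2-\norm{d_t^+\m_h^{n-1}}_2^2\bigr)=\tfrac12\,D_t^-\norm{d_t^+\m_h^n}_2^2$. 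Combining this with the two expressions above gives (ii).
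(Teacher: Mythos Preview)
Your argument is correct and, for part~(i), identical to the paper's. For part~(ii) both you and the authors begin by testing~\eqref{equ:midpoint-implicit-continuous} against $-\Delta_h\bar\m_h^n$ to produce the $\tfrac12 D_t\|\nabla_h\m_h^{n+1}\|_2^2$ term, but the subsequent bookkeeping is organized differently. The paper takes two further inner products of the scheme, with $d_t\m_h^{n+1}$ and with $d_{tt}\m_h^n$, obtaining three identities that are then linearly combined so the mixed triple-product terms cancel. You instead stay inside the single tested equation: after the cyclic identity you substitute $\bar\m_h^n\times\Delta_h\bar\m_h^n$ directly from~\eqref{equ:midpoint-implicit-continuous}, which collapses everything in one stroke to $\alpha\|d_t\m_h^{n+1}\|_2^2+\alpha\tau\langle d_{tt}\m_h^n,d_t\m_h^{n+1}\rangle$. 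The two routes are algebraically equivalent (your substitution is implicitly performing the paper's two auxiliary tests), but your packaging is a bit more economical. One cosmetic remark: the sign in your displayed contribution should read $+\alpha\langle\bar\m_h^n\times(\cdot),\Delta_h\bar\m_h^n\rangle$ after pairing the $-\alpha$ coefficient with the test function $-\Delta_h\bar\m_h^n$; this does not affect the argument since your final identity comes out with the correct signs.
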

\begin{proof}
On account of the initial condition \eqref{subequ:initial-condition-II}, we see that $\m^0(\x_l) = \m^{1}(\x_l)$ holding for all $l \in L$. Taking the vector inner product with \eqref{equ:midpoint-implicit-continuous} by $(\m_h^{n+1}(\x_l) + \m_h^{n-1}(x_l))$, it obvious that we can get
  \begin{equation*}
    |\m_h^{n+1}| = |\m_h^{n}| = \cdots = |\m_h^{1}| = |\m_h^0| = 1 ,
  \end{equation*}
in the point-wise sense. This confirms \eqref{sublem1-1}. In order to verify \eqref{sublem1-2}, we take inner product with \eqref{equ:midpoint-implicit-continuous} by $-\Delta_h\bar{\m}_h^n$ and get
  \begin{equation*}
    \frac 1{2}D_t\norm{\nabla_h\m_h^{n+1}}_{2}^2 - \alpha \langle \bar{\m}_h^n\times d_{t}\m_h^{n+1},-\Delta_h\bar{\m}_h^n \rangle - \alpha\tau \langle \m_h^n\times d_{tt}\m_h^n,-\Delta_h\bar{\m}_h^n \rangle = 0.
  \end{equation*}
Subsequently, taking inner products with $d_t\m_h^{n+1}$ and $d_{tt}\m_h^{n+1}$ separately leads to the following equalities:
  \begin{equation*}
    \norm{d_t\m_h^{n+1}}_{2}^2 - \alpha\tau \langle \m_h^n\times d_{tt}\m_h^n, d_t\m_h^{n+1} \rangle = - \langle \bar{\m}_h^n\times d_t\m_h^{n+1}, -\Delta_h\bar{\m}_h^n \rangle ,
  \end{equation*}
 and
  \begin{equation*}
    \frac 1{2}D_t^-\norm{d_t^+\m_h^{n}}_{2}^2 + \alpha \langle \m_h^n\times d_{tt}\m_h^n, d_t\m_h^{n+1} \rangle = - \langle \m_h^n\times d_{tt}\m_h^n, -\Delta_h\bar{\m}_h^n \rangle .
  \end{equation*}
  A combination of the above three identities yields \eqref{sublem1-2}.
\end{proof}

In \cref{lem:lem2.1}, taking $k\rightarrow 0$ gives
\begin{equation}
  \frac {\mathrm{d}}{\mathrm{d}t}\left(\frac 1{2}\norm{\nabla_h\m_h^{n+1}}_{2}^2 + \frac {\alpha\tau}{2} \norm{\partial_t\m_h^{n}}_{2}^2\right) = -\alpha\norm{\partial_t\m_h^{n+1}}_{2}^2,
\end{equation}
which is consistent with the continuous energy law \eqref{equ:continuous-energy-law}. Accordingly, in the absence of the external magnetic field, the discretized version energy dissipation law would be maintained with a modification
\begin{equation}
  E ( \m_h^{n+1} ,  \m_h^n )
     = \frac{\alpha \tau}{2} \Big\| \frac{\m_h^{n+1} - \m_h^n}{k} \Big\|_2^2
      + \frac{1}{4} ( \| \nabla_h \m_h^{n+1} \|_2^2
      + \| \nabla_h \m_h^n \|_2^2 ).
  \label{equ:modification-energy}
\end{equation}

\begin{thm} \label{thm: energy stability}
  Given $\m_h^{n-1}, \m_h^{n}, \m_h^{n+1}\in W^{1,2}(\Omega_h, \mathds{S}^2)$, we have a discrete energy dissipation law, for the modified energy \eqref{equ:modification-energy}:
  \begin{equation}
    E ( \m_h^{n+1} ,  \m_h^n ) \leq E ( \m_h^{n} , \m_h^{n-1} ).
    \label{eng est-0}
  \end{equation}
\end{thm}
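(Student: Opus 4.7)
The plan is to observe that the modified energy $E(\m_h^{n+1},\m_h^n)$ has been engineered precisely so that the identity proved in \cref{lem:lem2.1}\,\eqref{sublem1-2} telescopes into a single-step dissipation inequality; nothing beyond bookkeeping should be required.

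First I would unpack the operator notation. By definition,
\begin{equation*}
  D_t \|\nabla_h \m_h^{n+1}\|_2^2 = \frac{\|\nabla_h \m_h^{n+1}\|_2^2 - \|\nabla_h \m_h^{n-1}\|_2^2}{2k},
  \qquad
  D_t^{-} \|d_t^+ \m_h^{n}\|_2^2 = \frac{\|d_t^+\m_h^{n}\|_2^2 - \|d_t^+\m_h^{n-1}\|_2^2}{k}.
\end{equation*}
Substituting these into \eqref{sublem1-2} and multiplying through by $k$ yields
\begin{equation*}
  \frac{1}{4}\bigl(\|\nabla_h \m_h^{n+1}\|_2^2 - \|\nabla_h \m_h^{n-1}\|_2^2\bigr)
  + \frac{\alpha\tau}{2}\bigl(\|d_t^+\m_h^{n}\|_2^2 - \|d_t^+\m_h^{n-1}\|_2^2\bigr)
  = -\alpha k \|d_t\m_h^{n+1}\|_2^2.
\end{equation*}

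Next I would compute the one-step energy increment directly from the definition \eqref{equ:modification-energy}. Since the cross term $\tfrac14\|\nabla_h\m_h^n\|_2^2$ is common to both $E(\m_h^{n+1},\m_h^n)$ and $E(\m_h^n,\m_h^{n-1})$, it drops out, leaving
\begin{equation*}
  E(\m_h^{n+1},\m_h^n) - E(\m_h^{n},\m_h^{n-1})
  = \frac{\alpha\tau}{2}\bigl(\|d_t^+\m_h^n\|_2^2 - \|d_t^+\m_h^{n-1}\|_2^2\bigr)
  + \frac{1}{4}\bigl(\|\nabla_h\m_h^{n+1}\|_2^2 - \|\nabla_h\m_h^{n-1}\|_2^2\bigr).
\end{equation*}
This is exactly the left-hand side of the rearranged identity above, so it equals $-\alpha k \|d_t\m_h^{n+1}\|_2^2$, which is non-positive since $\alpha,k\ge 0$. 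This immediately gives \eqref{eng est-0}.

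There is no real obstacle: the only thing to verify carefully is that the \emph{averaged} gradient term $\tfrac14(\|\nabla_h\m_h^{n+1}\|_2^2+\|\nabla_h\m_h^n\|_2^2)$ in \eqref{equ:modification-energy} is the correct anti-derivative of the centered discrete derivative $\tfrac12 D_t\|\nabla_h\m_h^{n+1}\|_2^2$ (one factor of $\tfrac12$ from $D_t$, one from the time averaging across the three-level stencil). Once this matching is spotted, the inequality reduces to reading off the sign in \eqref{sublem1-2}, and in passing one recovers the sharper identity
\begin{equation*}
  E(\m_h^{n+1},\m_h^n) = E(\m_h^{n},\m_h^{n-1}) - \alpha k \|d_t\m_h^{n+1}\|_2^2,
\end{equation*}
which is the fully discrete analogue of the continuous law \eqref{equ:continuous-energy-law}.
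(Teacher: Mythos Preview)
Your proof is correct and is essentially the paper's argument: both establish the identity
$E(\m_h^{n+1},\m_h^n)-E(\m_h^{n},\m_h^{n-1})=-\alpha k\|d_t\m_h^{n+1}\|_2^2\le 0$, which is precisely the paper's equation~\eqref{eng est-3}. The only cosmetic difference is that you invoke \cref{lem:lem2.1}\,\eqref{sublem1-2} directly, whereas the paper re-derives the same identity from scratch by testing~\eqref{equ:midpoint-implicit-continuous} against the single combined multiplier $\mu^n=\alpha(d_t\m_h^{n+1}+\tau d_{tt}\m_h^n)-\Delta_h\bar{\m}_h^n$.
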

\begin{proof}

Denote a discrete function
$$
  \mu^n := \alpha \Big( \frac{\m_h^{n+1} - \m_h^{n-1}}{2k} + \tau\frac{\m_h^{n+1} - 2\m_h^{n} + \m_h^{n-1}}{k^2} \Big) - \frac12 \Delta_h ( \m_h^{n+1} + \m_h^{n-1} ) .
$$
Taking a discrete inner product with~\eqref{equ:midpoint-implicit-continuous} by $\mu^n$ gives
\begin{align}
   &
   \frac{\alpha}{4 k^2} \langle \m_h^{n+1} - \m_h^{n-1} ,  \m_h^{n+1} - \m_h^{n-1} \rangle
  + \frac{\alpha \tau}{2 k^3}  \langle \m_h^{n+1} - \m_h^{n-1} ,
   \m_h^{n+1} - 2 \m_h^n + \m_h^{n-1} \rangle    \label{eng est-1}
\\
  & \qquad
    + \frac{\alpha}{4 k}  \Big\langle \m_h^{n+1} - \m_h^{n-1} ,
   - \Delta_h ( \m_h^{n+1} + \m_h^{n-1} )  \Big\rangle   \nonumber
\\
  &  \qquad
  = \Big\langle -\frac{\m_h^{n+1} + \m_h^{n-1}}{2}\times \mu^n , \mu^n \Big\rangle = 0 .
  \nonumber
\end{align}
Meanwhile, the following estimates are available:
\begin{align}
  &
   \langle \m_h^{n+1} - \m_h^{n-1} ,  \m_h^{n+1} - \m_h^{n-1} \rangle
   = \| \m_h^{n+1} - \m_h^{n-1} \|_2^2 \ge 0 ,   \label{eng est-2-1}
\\
  &
  \langle \m_h^{n+1} - \m_h^{n-1} ,
   \m_h^{n+1} - 2 \m_h^n + \m_h^{n-1} \rangle   \nonumber
\\
  = &
   \Big\langle (\m_h^{n+1} - \m_h^n) + (\m_h^n - \m_h^{n-1} ) ,
   (\m_h^{n+1} - \m_h^n) - (\m_h^n - \m_h^{n-1} ) \Big\rangle  ,  \nonumber
\\
  = &
   \| \m_h^{n+1} - \m_h^n \|_2^2 - \| \m_h^n - \m_h^{n-1} \|_2^2 ,
   \label{eng est-2-2}
\\
  &
  \Big\langle \m_h^{n+1} - \m_h^{n-1} ,
   - \Delta_h ( \m_h^{n+1} + \m_h^{n-1} )  \Big\rangle   \nonumber
\\
  =&
   \Big\langle \nabla_h ( \m_h^{n+1} - \m_h^{n-1} ) ,
   \nabla_h ( \m_h^{n+1} + \m_h^{n-1} )  \Big\rangle
  = \| \nabla_h \m_h^{n+1} \|_2^2 - \| \nabla_h \m_h^{n-1} \|_2^2
  \nonumber
\\
  =&
  ( \| \nabla_h \m_h^{n+1} \|_2^2  + \| \nabla_h \m_h^n \|_2^2 )
  - (  \| \nabla_h \m_h^n \|_2^2 + \| \nabla_h \m_h^{n-1} \|_2^2  )  .
  \label{eng est-2-3}
\end{align}
Going back to~\eqref{eng est-1}, we arrive at
\begin{align}
   &
    \frac{\alpha \tau}{2 k} \Big(  \Big\| \frac{\m_h^{n+1} - \m_h^n}{k} \Big\|_2^2
    - \Big\| \frac{\m_h^n - \m_h^{n-1}}{k}  \Big\|_2^2 \Big)
    \label{eng est-3}
\\
  & \qquad
      + \frac{1}{4 k} \Big(
 ( \| \nabla_h \m_h^{n+1} \|_2^2  + \| \nabla_h \m_h^n \|_2^2 )
  - (  \| \nabla_h \m_h^n \|_2^2 + \| \nabla_h \m_h^{n-1} \|_2^2  )  \Big)
     \nonumber
\\
  & \qquad
    = - \alpha  \Big\| \frac{\m_h^{n+1} - \m_h^{n-1}}{2k}  \Big\|_2^2 \le 0 ,
  \nonumber
\end{align}
which is exactly the energy dissipation estimate~\eqref{eng est-0}. This finishes the proof of Theorem~\ref{thm: energy stability}.
\end{proof}

Meanwhile, it is noticed that, given the initial profile of $\m$ at $t=0$, namely $\m^0$, an accurate approximation to $\m^1$ and $\m^2$ has to be made. In more details, an $O (k^2 + h^2)$ accuracy is required for both $\m^1$, $\m^2$ and $\frac{\m^1 - \m^0}{k}$, $\frac{\m^2 - \m^1}{k}$, which is needed in the convergence analysis.

The initial profile $\m^0$ could be taken as $\m^0 =  \m ( \cdot, 0)$. 
This in turn gives a trivial zero initial error for $\m^0$. For $\m^1$ and $\m^2$, a careful Taylor expansion reveals that
\begin{align}
  \m^1  =& \m^0 + k \partial_t \m^0 + \frac{k^2}{2} \partial_{tt} \m^0 + O (k^3)  \nonumber
\\
  = &
   \m^0  + \frac{k^2}{2} \partial_{tt} \m^0 + O (k^3) ,
  \label{initial-Taylor-1-1}
\\
  \m^2  = &
  \m^0 + 2 k \partial_t \m^0 + 2 k^2 \partial_{tt} \m^0 + O (k^3)  \nonumber
\\
  = &
  \m^0  + 2 k^2 \partial_{tt} \m^0 + O (k^3) ,
  \label{initial-Taylor-1-2}
\end{align}
in which the initial data~\eqref{subequ:initial-condition-II},  $\partial_t \m ( \cdot, 0) \equiv 0$, has been applied in the derivation. Therefore, an accurate approximation to $\m^1$ and $\m^2$ relies on a precise value of $\partial_{tt} \m$ at $t=0$. An evaluation of the original PDE~\eqref{subequ:iLLG-equ} implies that
\begin{align}
  \m^0 \times ( \partial_{tt} \m^0 ) = \frac{1}{\alpha \tau} \m^0 \times ( \Delta \m^0 + \mathbf{H}_{\mathrm{e}}^0 ) ,
  \label{initial-2}
\end{align}
in which the trivial initial data~\eqref{subequ:initial-condition-II} has been applied again. Meanwhile, motivated by the point-wise temporal differentiation identity
\begin{equation}
  \m \cdot \partial_{tt} \m = - ( \partial_t \m )^2 + \frac12 \partial_{tt} ( | \m |^2 )
  =  - ( \partial_t \m )^2 ,  \label{initial-3-1}
\end{equation}
and the fact that $| \m | \equiv 1$, we see that its evaluation at $t=0$ yields
\begin{equation}
  \m^0 \cdot \partial_{tt} \m^0 =  - ( \partial_t \m^0 )^2 = 0 .  \label{initial-3-2}
\end{equation}
Subsequently, a combination of~\eqref{initial-3-1} and \eqref{initial-3-2} uniquely determines $\partial_{tt} \m^0$:
\begin{equation}
  \partial_{tt} \m^0 = -\frac 1{\alpha\tau}\m^0\times( \m^0\times( \Delta \m^0 + \mathbf{H}_{\mathrm{e}}^0 ) ),
\end{equation}
and a substitution of this value into~\eqref{initial-Taylor-1-1}, \eqref{initial-Taylor-1-2} leads to an $O (k^3)$ approximation to $\m^1$ and $\m^2$.

Moreover, with spatial approximation introduced, an $O (k^2 + h^2)$ accuracy is obtained for both $\m^1$, $\m^2$ and $\frac{\m^1 - \m^0}{k}$, $\frac{\m^2 - \m^1}{k}$. This finishes the initialization process.

\section{Convergence analysis}
\label{sec:analysis}
The theoretical result concerning the convergence analysis is stated below.

\begin{thm}\label{cccthm2} Assume that the exact solution of \eqref{equ:iLL-equation} has the regularity $\m_e \in C^3 ([0,T]; [C^0(\bar{\Omega})]^3) \cap C^2([0,T]; [C^2(\bar{\Omega})]^3) \cap L^{\infty}([0,T]; [C^4(\bar{\Omega})]^3)$. Denote a nodal interpolation operator $\mathcal{P}_h$ such that $\mathcal{P}_h\m_h\in C^1(\Omega)$, and the numerical solution ${\m}_h^n$ ($n\ge0$) obtained from~\eqref{equ:midpoint-implicit-continuous} with the initial error satisfying $\| \e_h^p \|_2 +\|\nabla_h \e_h^p \|_2 = \mathcal{O} (k^2 + h^2)$, where $\e_h^p = \mathcal{P}_h \m_e (\cdot,t_p) - \m_h^p$, $p=0,1, 2$, and $\| \frac{\e_h^{q+1} - \e_h^q}{k} \|_2 = {\mathcal O} (k^2 + h^2)$, $q=0, 1$. 
Then the following convergence result holds for $2\leq n\leq \left\lfloor\frac{T}{k}\right\rfloor$ as $h,k\to0^+$:	\begin{align} \label{convergence-0}
	\| \mathcal{P}_h \m_e (\cdot,t_n) - \m_h^n \|_{2}
	+ \|\nabla_h ( \mathcal{P}_{h} \m_e (\cdot,t_n)- \m_h^n ) \|_2
	&\leq \mathcal{C}( k^2+h^2) ,
	\end{align}	
	in which the constant $\mathcal{C}>0$ is independent of $k$ and $h$.
\end{thm}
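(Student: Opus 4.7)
My plan is to run a discrete energy estimate on the error equation obtained by inserting the exact solution (or a slightly corrected surrogate of it) into the scheme and subtracting~\eqref{equ:midpoint-implicit-continuous}. I would first introduce the interpolant $\mathcal{P}_h \m_e$ and, if a direct comparison cannot close the bootstrap, a higher-order corrector $\m_h^\ast = \mathcal{P}_h \m_e + k^2 \ppsi^{(1)} + h^2 \ppsi^{(2)}$ chosen so that substituting $\m_h^\ast$ into~\eqref{equ:midpoint-implicit-continuous} leaves a truncation residual of order $O(k^3 + h^3)$ while keeping $|\m_h^\ast|$ within $O(k^2 + h^2)$ of unity. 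The initialization discussion preceding Theorem~\ref{cccthm2} already supplies $\m_h^0, \m_h^1, \m_h^2$ that match $\m_h^\ast$ at the required order, so the base step of the induction is automatic.

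Defining $\e_h^n = \mathcal{P}_h \m_e(\cdot,t_n) - \m_h^n$ and subtracting yields an equation of the schematic form
\begin{align*}
d_t \e_h^{n+1} - \alpha \bar{\m}_e^n \times \bigl( d_t \e_h^{n+1} + \tau d_{tt} \e_h^n \bigr) + \bar{\m}_e^n \times \Delta_h \bar{\e}_h^n = \mathcal{N}_h^n + \mathcal{T}_h^n,
\end{align*}
where $\bar{\e}_h^n = \tfrac12(\e_h^{n+1} + \e_h^{n-1})$, $\mathcal{T}_h^n = O(k^2 + h^2)$ is the truncation residual, and $\mathcal{N}_h^n$ gathers nonlinear remainders like $\bar{\e}_h^n \times \Delta_h \bar{\m}_h^n$, $\bar{\e}_h^n \times d_t \m_h^{n+1}$, and $\bar{\e}_h^n \times d_{tt} \m_h^n$. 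Mimicking the energy identity of Lemma~\ref{lem:lem2.1}, I would take discrete inner products with $-\Delta_h \bar{\e}_h^n$ (whose pairing with the linearized $\bar{\m}_e^n \times \Delta_h \bar{\e}_h^n$ vanishes pointwise) and then separately with $d_t \e_h^{n+1}$ and $d_{tt} \e_h^{n+1}$, combining them to eliminate the cross-coupling generated by the inertial term. This should produce a modified energy
\begin{align*}
\tilde E_h^{n+1} = \tfrac12 \| \nabla_h \e_h^{n+1} \|_2^2 + \tfrac{\alpha\tau}{2} \| d_t^+ \e_h^n \|_2^2,
\end{align*}
together with nonnegative dissipation $\alpha \| d_t \e_h^{n+1} \|_2^2$, so that Cauchy--Schwarz and Young applied to $\mathcal{N}_h^n + \mathcal{T}_h^n$ (using the $W^{1,\infty}$ regularity of $\m_e$) give $D_t \tilde E_h^{n+1} \leq C \tilde E_h^{n+1} + C(k^2 + h^2)^2$, and discrete Gronwall closes the estimate.

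The principal obstacle is that $\mathcal{N}_h^n$ contains products such as $\bar{\e}_h^n \times \Delta_h \bar{\m}_h^n$ in which the discrete Laplacian acts on the \emph{numerical} solution, so I need an $\ell^\infty$ bound on $\Delta_h \m_h^p$ rather than on the smooth $\Delta \m_e$. I would resolve this by an induction on $n$: assuming $\| \e_h^p \|_2 + \| \nabla_h \e_h^p \|_2 \leq \hat C (k^2 + h^2)$ for all $p \leq n$, the discrete inverse inequality combined with the scaling $k \leq C h^2$ anticipated from Section~\ref{sec:solving} upgrades this into smallness of $\| \e_h^p \|_\infty$ and $\| \nabla_h \e_h^p \|_\infty$, so $\| \m_h^p \|_{W^{1,\infty}}$ remains uniformly bounded and $\Delta_h \bar{\m}_h^n$ stays pointwise close to $\Delta \m_e(\cdot, t_n)$. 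The same inductive framework is essential for handling the hyperbolic inertial term $\alpha \tau d_{tt} \m$, which carries no intrinsic parabolic damping and whose contribution becomes estimable only after the combined testing procedure above. Upgrading $\tilde E_h^{n+1}$ back to the rate $(k^2 + h^2)^2$ advances the induction to step $n+1$, and the passage from the $H_h^1$ bound on $\e_h^n$ to the stated bound~\eqref{convergence-0} is then immediate.
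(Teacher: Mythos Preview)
Your three separate tests against $-\Delta_h\bar{\e}_h^n$, $d_t\e_h^{n+1}$, and $d_{tt}\e_h^n$ are precisely what the paper does, only packaged as a single inner product with
\[
\tilde{\mmu}_h^n := \alpha\bigl(d_t\e_h^{n+1} + \tau\, d_{tt}\e_h^n\bigr) - \Delta_h\bar{\e}_h^n ,
\]
and the resulting energy identity on the left-hand side is the same as yours.

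Where your proposal diverges is the decomposition of the nonlinearity, and the ``principal obstacle'' you flag is in fact illusory. You split so that $\mathcal{N}_h^n$ carries $\bar{\e}_h^n\times\Delta_h\bar{\m}_h^n$, $\bar{\e}_h^n\times d_t\m_h^{n+1}$, $\bar{\e}_h^n\times d_{tt}\m_h^n$, and then propose an induction with inverse inequalities under $k\le Ch^2$ to control $\|\Delta_h\m_h^n\|_\infty$. None of this is needed. Writing $\mmu_h^n = \underline{\mmu}_h^n - \tilde{\mmu}_h^n$, your $\mathcal{N}_h^n=\bar{\e}_h^n\times\mmu_h^n$ obeys
\[
\langle\bar{\e}_h^n\times\mmu_h^n,\ \tilde{\mmu}_h^n\rangle
=\langle\bar{\e}_h^n\times\underline{\mmu}_h^n,\ \tilde{\mmu}_h^n\rangle
-\langle\bar{\e}_h^n\times\tilde{\mmu}_h^n,\ \tilde{\mmu}_h^n\rangle
=\langle\bar{\e}_h^n\times\underline{\mmu}_h^n,\ \tilde{\mmu}_h^n\rangle ,
\]
the last term vanishing pointwise. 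Equivalently, the paper adopts the \emph{opposite} splitting from the start (see~\eqref{error equation-1}): in $\bar{a}\times\mu_a-\bar{c}\times\mu_c$ it factors out $\bar{\m}_h$ (not $\bar{\m}_e$) against $\tilde{\mmu}_h^n$, so the numerical solution sits only in the factor annihilated on testing, while $\bar{\e}_h$ is paired with $\underline{\mmu}_h^n$, whose $W_h^{1,\infty}$ and discrete time-Lipschitz bounds come straight from the regularity of $\m_e$ (see~\eqref{bound-1}). No bootstrap, no inverse inequality, and no coupling of $k$ and $h$ enters the convergence proof---which is why Theorem~\ref{cccthm2} carries no step-size restriction.

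One technical point you pass over: once you test, the pairings $\langle\bar{\e}_h^n\times\underline{\mmu}_h^n,\ d_{tt}\e_h^n\rangle$ and $\langle\tau^n,\ d_{tt}\e_h^n\rangle$ cannot be bounded directly, since $\|d_{tt}\e_h^n\|_2$ is not controlled by the discrete energy. The paper handles these by a summation by parts \emph{in time} (Lemma~\ref{lem 2}), converting each into a bounded piece plus a telescoping increment $\frac{1}{k}\bigl(\langle d_t^+\e_h^n,\g_h^n\rangle-\langle d_t^+\e_h^{n-1},\g_h^{n-1}\rangle\bigr)$; the telescoping sums are then absorbed at the final step before the discrete Gronwall inequality is applied.
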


Before the rigorous proof is given, the following estimates are declared, which will be utilized in the convergence analysis. In the sequel, for simplicity of notation, we will use a uniform constant $\mathcal{C}$ to denote all the controllable constants throughout this part.

\begin{lem} [Discrete gradient acting on cross product] \label{lem 1} \cite{CHEN202155}
	For grid functions $\f_h$ and $\g_h$ over the uniform numerical grid, we have
	\begin{align}
	\|\nabla_h({\f}_h \times{\g}_h ) \|_2 &\leq \mathcal{C}\Big(\|{\f}_h\|_2 \cdot \|\nabla_h {\g}_h\|_\infty +\|{\g}_h\|_{\infty} \cdot \|\nabla_h{\f}_h\|_2 \Big) . \label{lem 1-0}
	\end{align}
\end{lem}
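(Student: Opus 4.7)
The plan is to establish a discrete Leibniz-type identity for the central-difference gradient acting on a cross product, then apply the triangle and pointwise Cauchy--Schwarz inequalities to split $\f_h$ and $\g_h$ into the required norms. The proof is effectively one-dimensional: work on each coordinate direction separately and sum at the end.

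First, I would establish the single-direction identity. Using the elementary splitting $a_+ b_+ - a_- b_- = (a_+ - a_-) b_+ + a_- (b_+ - b_-)$ componentwise, together with the bilinearity of the cross product, the $x$-directional central difference satisfies
\begin{equation*}
  D_x^h(\f_h \times \g_h)(\x_{i,j,k}) = D_x^h \f_h(\x_{i,j,k}) \times \g_h(\x_{i+1,j,k}) + \f_h(\x_{i-1,j,k}) \times D_x^h \g_h(\x_{i,j,k}),
\end{equation*}
with analogous identities for $D_y^h$ and $D_z^h$. The two summands carry different one-step index shifts; under the ghost-point extrapolation associated with \eqref{subequ:boundary-condition}, shifting a grid function by one node alters its $\ell^2$ or $\ell^\infty$ norm by at most an $O(1)$ factor.

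Next, applying the $\ell^2(\Omega_h)$ triangle inequality together with the pointwise bound $|a \times b| \le |a|\,|b|$ to each summand yields, for the $x$-direction,
\begin{equation*}
  \|D_x^h(\f_h \times \g_h)\|_2 \leq \|\g_h\|_\infty \, \|D_x^h \f_h\|_2 + \|\nabla_h \g_h\|_\infty \, \|\f_h\|_2,
\end{equation*}
where in the first summand I pull $\|\g_h\|_\infty$ off $\g_h(\cdot + h\e_x)$ while keeping $D_x^h \f_h$ in $\ell^2$, and in the second summand I pull $\|\nabla_h \g_h\|_\infty$ off $D_x^h \g_h$ while keeping the shifted $\f_h(\cdot - h\e_x)$ in $\ell^2$. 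Summing the three directional estimates and invoking $\|\nabla_h u\|_2^2 = \sum_{\alpha\in\{x,y,z\}} \|D_\alpha^h u\|_2^2$ delivers \eqref{lem 1-0} with a dimension-dependent constant $\mathcal{C}$. The only bookkeeping point is to check that the shifted $\ell^2$ and $\ell^\infty$ norms agree with the unshifted ones up to a uniform constant, which is immediate from the extrapolation formula $\m_{i_x+1,j,k} = \m_{i_x,j,k}$ (and its $y$, $z$ analogues) already stated in the paper. There is no deep obstacle beyond tracking which factor carries which norm, and making the asymmetric choice (use $\|\g_h\|_\infty$ in the first term, $\|\nabla_h \g_h\|_\infty$ in the second) that matches the right-hand side of \eqref{lem 1-0}.
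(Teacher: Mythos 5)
Your argument is correct: the discrete Leibniz identity $a_+b_+ - a_-b_- = (a_+-a_-)b_+ + a_-(b_+-b_-)$ applied componentwise to the cross product, followed by the triangle inequality, the pointwise bound $|a\times b|\le |a|\,|b|$, and the observation that one-node shifts only perturb discrete norms by an $O(1)$ factor under the ghost-point extrapolation, does yield \eqref{lem 1-0}, and your asymmetric assignment of the $\ell^2$ and $\ell^\infty$ factors matches the stated right-hand side. Note, however, that this paper gives no proof of the lemma at all---it is imported verbatim from \cite{CHEN202155}---so there is no in-text argument to compare against; your proof is the standard one and is essentially what that reference carries out.
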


\begin{lem} [Point-wise product involved with second order temporal stencil] \label{lem 2}
	For grid functions $\f_h$ and $\g_h$ over the time domain, we have
	\begin{align}
	\frac{\f_h^{n+1} - 2 \f_h^n + \f_h^{n-1} }{k^2}  \cdot \g_h^n
	= & - \frac{ \f_h^n - \f_h^{n-1} }{k}  \cdot \frac{ \g_h^n - \g_h^{n-1} }{k}  \nonumber
\\
  &
	+ \frac{1}{k} \Big( \frac{\f_h^{n+1} - \f_h^n}{k}  \cdot \g_h^n
	-  \frac{\f_h^n - \f_h^{n-1}}{k}  \cdot \g_h^{n-1} \Big) . \label{lem 2-0}
	\end{align}
\end{lem}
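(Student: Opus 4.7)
The identity is purely algebraic, an instance of a discrete Leibniz/summation-by-parts rule at the level of a single index, so the plan is to verify it by direct manipulation rather than by any induction or analytic estimate. First I would introduce the shorthand $D^+_h f^n := (\f_h^{n+1}-\f_h^n)/k$ and $D^-_h f^n := (\f_h^n-\f_h^{n-1})/k$, and observe the elementary relation
\begin{equation*}
\frac{\f_h^{n+1}-2\f_h^n+\f_h^{n-1}}{k^2} = \frac{D^+_h f^n - D^-_h f^n}{k},
\end{equation*}
so that the left-hand side of \eqref{lem 2-0} becomes $\tfrac{1}{k}\bigl(D^+_h f^n \cdot \g_h^n - D^-_h f^n \cdot \g_h^n\bigr)$.

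Next I would rewrite the right-hand side in the same notation as
\begin{equation*}
-D^-_h f^n \cdot \frac{\g_h^n - \g_h^{n-1}}{k} + \frac{1}{k}\bigl(D^+_h f^n \cdot \g_h^n - D^-_h f^n \cdot \g_h^{n-1}\bigr),
\end{equation*}
and then expand the first term using $\g_h^n - \g_h^{n-1} = k\,D^-_h g^n$. The two terms involving $D^-_h f^n \cdot \g_h^{n-1}$ cancel, and what remains is exactly $\tfrac{1}{k}(D^+_h f^n - D^-_h f^n)\cdot \g_h^n$, matching the left-hand side. Equivalently, one can read the identity as the discrete product rule
\begin{equation*}
\frac{1}{k}\bigl(D^+_h f^n \cdot \g_h^n - D^-_h f^n \cdot \g_h^{n-1}\bigr) = \frac{D^+_h f^n - D^-_h f^n}{k}\cdot \g_h^n + D^-_h f^n \cdot D^-_h g^n,
\end{equation*}
which is the discrete analogue of $(fg)' = f'g + fg'$ applied between levels $n-1$ and $n$.

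There is no substantive obstacle here; the only thing to be careful about is bookkeeping of the temporal indices in $\g_h$ (level $n$ in the second cross-term on the left summand versus level $n-1$ on the right summand), since it is precisely this index shift that produces the $D^-_h g^n$ factor on the right-hand side. Once the shorthand above is in place, the identity drops out in one line.
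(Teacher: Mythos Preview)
Your verification is correct; the identity is indeed a one-line discrete product rule once you write the second difference as $(D_h^+ f^n - D_h^- f^n)/k$ and track the index shift in $\g_h$. The paper itself states this lemma without proof, so there is nothing to compare against beyond noting that your argument is exactly the direct algebraic check one would expect.
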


	Now we proceed into the convergence estimate. First, we construct an approximate solution $\underline{\m}$:
	\begin{equation}
	\underline{\m} = \m_e + h^2 \m^{(1)} ,  \label{exact-1}
	\end{equation}
	in which the auxiliary field $\m^{(1)}$ satisfies the following Poisson equation
	\begin{align}  \label{exact-2}
	& \Delta \m^{(1)} = \hat{C}  \quad \mbox{with} \, \, \,
	\hat{C} = \frac{1}{| \Omega|} \int_{\partial \Omega} \,
	\partial_{\boldmath \nu}^3 \m_e \, \textrm{d} s ,  \\
	& \partial_z \m^{(1)} \mid_{z=0} = - \frac{1}{24} \partial_z^3 \m_e \mid_{z=0} ,  \quad
	\partial_z \m^{(1)} \mid_{z=1} = \frac{1}{24} \partial_z^3 \m_e \mid_{z=1} ,  \nonumber
	\end{align}
	with boundary conditions along $x$ and $y$ directions defined in a similar way.
	
	The purpose of such a construction will be illustrated later. Then we extend the approximate profile $\underline{\m}$ to the numerical ``ghost" points, according to the extrapolation formula:
	\begin{equation}
	\underline{\m}_{i,j,0}= \underline{\m}_{i,j,1} , \quad
	\underline{\m}_{i,j,nz+1} = \underline{\m}_{i,j,nz} ,  \label{exact-3}
	\end{equation}
	and the extrapolation for other boundaries can be formulated in the same manner. Subsequently, we prove that such an extrapolation yields a higher order $\mathcal{O}(h^5)$ approximation, instead of the standard $\mathcal{O}(h^3)$ accuracy. Also see the related works~\cite{STWW2003, Wang2000, Wang2004} in the existing literature.
	
	Performing a careful Taylor expansion for the exact solution around the boundary section $z=0$, combined with the mesh point values: ${z}_0 = - \frac12 h$, ${z}_1 = \frac12 h$, we get
	\begin{align}
	\m_e  ({x}_i, {y}_j, {z}_0 )
	&= \m_e ({x}_i, {y}_j, {z}_1 )
	- h \partial_z \m_e ({x}_i, {y}_j, 0 )
	- \frac{h^3}{24} \partial_z^3 \m_e ({x}_i, {y}_j, 0 )
	+   \mathcal{O}(h^5) \nonumber
	\\
	&= \m_e ({x}_i, {y}_j, {z}_1 )
	- \frac{h^3}{24} \partial_z^3 \m_e ({x}_i, {y}_j, 0 )
	+   \mathcal{O}(h^5) ,  \label{exact-4}
	\end{align}
	in which the homogenous boundary condition has been applied in the second step. A similar Taylor expansion for the constructed profile $\m^{(1)}$ reveals that
	\begin{align}
	\m^{(1)}  ({x}_i, {y}_j, {z}_0 )
	&= \m^{(1)} ({x}_i, {y}_j, {z}_1 )
	- h \partial_z \m^{(1)} ({x}_i, {y}_j, 0 )
	+   \mathcal{O}(h^3) \nonumber
	\\
	&= \m^{(1)} ({x}_i, {y}_j, {z}_1 )
	+ \frac{h}{24} \partial_z^3 \m_e ({x}_i, {y}_j, 0 )
	+   \mathcal{O}(h^3)  , \label{exact-5}
	\end{align}
	with the boundary condition in~\eqref{exact-2} applied. In turn, a substitution of~\eqref{exact-4}-\eqref{exact-5} into~\eqref{exact-1} indicates that
	\begin{equation}
	\underline{\m}  ({x}_i, {y}_j, {z}_0 )
	= \underline{\m} ({x}_i, {y}_j, {z}_1 )
	+   \mathcal{O}(h^5) .   \label{exact-6}
	\end{equation}
	In other words, the extrapolation formula~\eqref{exact-3} is indeed $\mathcal{O}(h^5)$ accurate.
	
	As a result of the boundary extrapolation estimate~\eqref{exact-6}, we see that the discrete Laplacian of $\underline{\m}$ yields the second-order accuracy at all the mesh points (including boundary points):
	\begin{equation}
	\Delta_h \underline{\m}_{i,j,k} = \Delta \m_e ({x}_i, {y}_j, {z}_k )
	+   \mathcal{O}(h^2) , \quad \forall 0 \le i \le nx+1, 0 \le j \le ny+1, 0 \le k \le nz+1.  \label{consistency-1}
	\end{equation}
	Moreover, a detailed calculation of Taylor expansion, in both time and space, leads to the following truncation error estimate:
	\begin{align}
	\frac{ \underline{\m}_h^{n+1} - \underline{\m}_h^{n-1} }{2 k} = &
	 \frac{\underline{\m}_h^{n+1} + \underline{\m}_h^{n-1}}{2} \times \Big( \alpha \frac{\underline{\m}_h^{n+1} - \underline{\m}_h^{n-1}}{2k} + \alpha \tau \frac{\underline{\m}_h^{n+1} - 2 \underline{\m}_h^{n} + \underline{\m}_h^{n-1}}{k^2}  \nonumber
 \\
   &
    - \Delta_h \Big(\frac{\underline{\m}_h^{n+1} + \underline{\m}_h^{n-1}}{2} \Big)  \Big) + \tau^n  ,
	\label{consistency-2} 	
	\end{align}
	where $\| \tau^n \|_2 \le \mathcal{C} (k^2+h^2)$. In addition, a higher order Taylor expansion in space and time reveals the following estimate for the discrete gradient of the truncation error, in both time and space:
\begin{equation}
  \| \nabla _h \tau^n \|_2 , \, \,
  \| \frac{\tau^n - \tau^{n-1}}{k} \|_2
  \le {\mathcal C} (k^2 + h^2) .    \label{truncation error-1}
\end{equation}
In fact, such a discrete $\| \cdot \|_{H_h^1}$ bound for the truncation comes from the regularity assumption for the exact solution, $\m_e \in C^3 ([0,T]; [C^0(\bar{\Omega})]^3) \cap C^2([0,T]; [C^2(\bar{\Omega})]^3) \cap L^{\infty}([0,T]; [C^4(\bar{\Omega})]^3)$, as stated in Theorem~\ref{cccthm2}, as well as the fact that $\m^{(1)} \in C^1 ([0, T]; [C^1(\bar{\Omega})]^3) \cap L^{\infty}([0,T]; [C^2(\bar{\Omega})]^3)$, as indicated by the Poisson equation~\eqref{exact-2}.	

We introduce the numerical error function ${\e}_h^n=\underline{\m}_h^n-\m_h^n$, instead of a direct comparison between the numerical solution and the exact solution. The error function between the numerical solution and the constructed solution ${\um}_h$ will be analyzed, due to its higher order consistency estimate~\eqref{exact-6} around the boundary. Therefore, a subtraction of \eqref{equ:midpoint-implicit-discrete} from the consistency estimate~\eqref{consistency-2} leads to the error function evolution system:
	\begin{align}
	&
	\frac{ \e_h^{n+1} - \e_h^{n-1}}{2 k} =
	\frac{\m_h^{n+1} + \m_h^{n-1}}{2} \times \tilde{\mmu}_h^n
	+ \frac{\e_h^{n+1} + \e_h^{n-1}}{2} \times \underline{\mmu}_h^n + \tau^n ,
	\label{error equation-1} 	
\\
  &
    \underline{\mmu}_h^n := \alpha \Big( \frac{\underline{\m}_h^{n+1} - \underline{\m}_h^{n-1}}{2k} + \tau \frac{\underline{\m}_h^{n+1} - 2 \underline{\m}_h^{n} + \underline{\m}_h^{n-1}}{k^2} \Big)
    - \Delta_h \Big(\frac{\underline{\m}_h^{n+1} + \underline{\m}_h^{n-1}}{2} \Big)  ,
    \label{error equation-2}
\\
  &
    \tilde{\mmu}_h^n := \alpha \Big( \frac{\e_h^{n+1} - \e_h^{n-1}}{2k} + \tau \frac{\e_h^{n+1} - 2  \e_h^{n} + \e_h^{n-1}}{k^2} \Big)
    - \Delta_h \Big(\frac{\e_h^{n+1} + \e_h^{n-1}}{2} \Big)  .   \label{error equation-3}
	\end{align}
	
	Before proceeding into the formal estimate, we establish a $W_h^\infty$ bound for $\underline{\mmu}_h^n$, which is based on the constructed approximate solution ${\um}$ (by~\eqref{error equation-2}). Because of the regularity for $\m_e$, the following bound is available:
	\begin{align}
	 \|  \underline{\mmu}_h^\ell \|_\infty  , \, \, \| \nabla_h \underline{\mmu}_h^\ell \|_\infty , \, \,
  \| \frac{\underline{\mmu}_h^n - \underline{\mmu}_h^{n-1}}{k} \|_\infty
  \le {\mathcal C} ,  \quad \ell = n, n-1 .   \label{bound-1}
	\end{align}

In addition, the following preliminary estimate will be useful in the convergence analysis.

\begin{lem} [A preliminary error estimate] \label{lem 3}
	We have
	\begin{align}
	\| \e_h^\ell \|_2^2 \le 2 \| \e_h^0 \|_2^2 + 2 T k \sum_{j=0}^{\ell-1}
	\| \frac{\e_h^{j+1} - \e_h^j}{k} \|_2^2 , \quad  \forall \ell \cdot k \le T . \label{lem 3-0}
	\end{align}
\end{lem}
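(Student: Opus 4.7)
The plan is to prove this via a telescoping identity followed by Cauchy--Schwarz in the summation index, which is the standard way to bound the $\ell^2$ norm of a sequence by the $\ell^2$ norm of its discrete time differences.

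First I would write the telescoping decomposition
\begin{equation*}
\e_h^\ell = \e_h^0 + \sum_{j=0}^{\ell-1} (\e_h^{j+1} - \e_h^j) = \e_h^0 + k \sum_{j=0}^{\ell-1} \frac{\e_h^{j+1} - \e_h^j}{k}.
\end{equation*}
Taking the discrete $\|\cdot\|_2$ norm and applying the triangle inequality in $\ell^2(\Omega_h,\mathds{R}^3)$ gives
\begin{equation*}
\|\e_h^\ell\|_2 \le \|\e_h^0\|_2 + k \sum_{j=0}^{\ell-1} \Bigl\| \frac{\e_h^{j+1} - \e_h^j}{k} \Bigr\|_2.
\end{equation*}

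Next I would square both sides and use the elementary inequality $(a+b)^2 \le 2a^2 + 2b^2$, and then apply the discrete Cauchy--Schwarz inequality to the sum, noting $\ell k \le T$:
\begin{equation*}
\Bigl( k \sum_{j=0}^{\ell-1} \Bigl\| \frac{\e_h^{j+1} - \e_h^j}{k} \Bigr\|_2 \Bigr)^2 \le k \cdot (\ell k) \sum_{j=0}^{\ell-1} \Bigl\| \frac{\e_h^{j+1} - \e_h^j}{k} \Bigr\|_2^2 \le T k \sum_{j=0}^{\ell-1} \Bigl\| \frac{\e_h^{j+1} - \e_h^j}{k} \Bigr\|_2^2.
\end{equation*}
Combining these two estimates yields the claimed bound~\eqref{lem 3-0} immediately.

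There is no real obstacle here; the lemma is a discrete analogue of the elementary identity $\|f(t)\|^2 \le 2\|f(0)\|^2 + 2T \int_0^T \|f'(s)\|^2 \, ds$, and the only subtlety is to be careful that the factor $\ell k$ is controlled by $T$ under the hypothesis $\ell \cdot k \le T$. The same argument would also give the stronger bound with $\ell k$ in place of $T$, but the form with $T$ is all that is needed in the subsequent Gronwall-type convergence argument.
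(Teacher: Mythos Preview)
Your proof is correct and follows essentially the same approach as the paper: telescoping expansion of $\e_h^\ell$, then the Cauchy inequality $(a+b)^2 \le 2a^2 + 2b^2$ combined with Cauchy--Schwarz on the $\ell$-term sum, using $\ell k \le T$. The only cosmetic difference is that the paper applies $\|a+b\|_2^2 \le 2\|a\|_2^2 + 2\|b\|_2^2$ directly to the vector sum before bounding $\|\sum_j v_j\|_2^2 \le \ell \sum_j \|v_j\|_2^2$, whereas you first pass through the triangle inequality on norms and then square; the two routes are equivalent.
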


\begin{proof}
We begin with the expansion:
\begin{align}
   \e_h^\ell  =  \e_h^0 + k \sum_{j=0}^{\ell-1} \frac{\e_h^{j+1} - \e_h^j}{k} ,
   \quad  \forall \ell \cdot k \le T . \label{lem 3-1}
\end{align}
In turn, a careful application of the Cauchy inequality reveals that
\begin{align}
  &
   \| \e_h^\ell \|_2^2 \le 2 \Big( \| \e_h^0 \|_2^2
   + k^2 \| \sum_{j=0}^{\ell-1} \frac{\e_h^{j+1} - \e_h^j}{k} \|_2^2 \Big) ,  \label{lem 3-2-1}
\\
  &
   k^2 \| \sum_{j=0}^{\ell-1} \frac{\e_h^{j+1} - \e_h^j}{k} \|_2^2
   \le k^2 \cdot \ell  \cdot \sum_{j=0}^{\ell-1} \| \frac{\e_h^{j+1} - \e_h^j}{k} \|_2^2
   \le T k  \sum_{j=0}^{\ell-1} \| \frac{\e_h^{j+1} - \e_h^j}{k} \|_2^2    ,  \label{lem 3-2-2}
\end{align}
in which the fact that $\ell \cdot k \le T$ has been applied. Therefore, a combination of \eqref{lem 3-2-1} and \eqref{lem 3-2-2} yields the desired estimate~\eqref{lem 3-0}. This completes the proof of Lemma~\ref{lem 3}.
\end{proof}

  Taking a discrete inner product with the numerical error equation \eqref{error equation-1} by $\tilde{\mmu}_h^n$ gives
	\begin{align}
	\frac{1}{2k} \langle \e_h^{n+1} - \e_h^{n-1} , \tilde{\mmu}_h^n \rangle =  &
	\langle \frac{\m_h^{n+1} + \m_h^{n-1}}{2} \times \tilde{\mmu}_h^n ,
	 \tilde{\mmu}_h^n \rangle  \nonumber
\\
    &
        + \langle \frac{\e_h^{n+1} + \e_h^{n-1}}{2} \times \underline{\mmu}_h^n ,
        \tilde{\mmu}_h^n \rangle
        + \langle \tau^n ,  \tilde{\mmu}_h^n \rangle .
	\label{convergence-1} 	
\end{align}
The analysis on the left hand side of~\eqref{convergence-1} is similar to the ones in~\eqref{eng est-2-1}-\eqref{eng est-2-3}:
\begin{align}
  &
  \frac{1}{2k} \langle \e_h^{n+1} - \e_h^{n-1}  , \tilde{\mmu}_h^n \rangle
  =  \frac{\alpha \tau}{2 k^3} \langle \e_h^{n+1} - \e_h^{n-1} ,
     \e_h^{n+1} - 2 \e_h^n + \e_h^{n-1} \rangle    \nonumber
\\
  &  \qquad \qquad \qquad
  + \frac{\alpha}{4 k^2} \langle \e_h^{n+1} - \e_h^{n-1} ,  \e_h^{n+1} - \e_h^{n-1} \rangle
  \nonumber
\\
  & \qquad \qquad \qquad
  + \frac{1}{4k}   \Big\langle \nabla_h ( \e_h^{n+1} - \e_h^{n-1} ) ,
   \nabla_h ( \e_h^{n+1} + \e_h^{n-1} )  \Big\rangle  , \label{convergence-2-1}
  \\
  &
   \langle \e_h^{n+1} - \e_h^{n-1} ,  \e_h^{n+1} - \e_h^{n-1} \rangle
   = \| \e^{n+1} - \e_h^{n-1} \|_2^2 \ge 0 ,   \label{convergence-2-2}
\\
  &
  \langle \e_h^{n+1} - \e_h^{n-1} ,
   \e_h^{n+1} - 2 \e_h^n + \e_h^{n-1} \rangle   \nonumber
\\
  = &
   \| \e_h^{n+1} - \e_h^n \|_2^2 - \| \e_h^n - \e_h^{n-1} \|_2^2 ,
   \label{convergence-2-3}
\\
  &
  \Big\langle \e_h^{n+1} - \e_h^{n-1} ,
   - \Delta_h ( \e_h^{n+1} + \e_h^{n-1} )  \Big\rangle   \nonumber
\\
  =&
   \Big\langle \nabla_h ( \e_h^{n+1} - \e_h^{n-1} ) ,
   \nabla_h ( \e_h^{n+1} + \e_h^{n-1} )  \Big\rangle
  = \| \nabla_h \e_h^{n+1} \|_2^2 - \| \nabla_h \e_h^{n-1} \|_2^2
  \nonumber
\\
  =&
  ( \| \nabla_h \e_h^{n+1} \|_2^2  + \| \nabla_h \e_h^n \|_2^2 )
  - (  \| \nabla_h \e_h^n \|_2^2 + \| \nabla_h \e_h^{n-1} \|_2^2  )  .
  \label{convergence-2-4}
\end{align}
This in turn leads to the following identity:
\begin{align}
  &
  \frac{1}{2k} \langle \e_h^{n+1} - \e_h^{n-1}  , \tilde{\mmu}_h^n \rangle
  =  \frac{1}{k} ( E_{\e, h}^{n+1} - E_{\e, h}^n )
   + \frac{\alpha}{4 k^2} \| \e_h^{n+1} - \e_h^{n-1} \|_2^2 ,   \label{convergence-3-1}
\\
  &
  E_{\e, h}^{n+1} =
  \frac{\alpha \tau}{2}  \| \frac{\e_h^{n+1} - \e_h^n}{k} \|_2^2
  + \frac14 ( \| \nabla_h \e_h^{n+1} \|_2^2  + \| \nabla_h \e_h^n \|_2^2 )  .
   \label{convergence-3-2}
\end{align}
The first term on the right hand side of~\eqref{convergence-1} vanishes, due to the fact that $\frac{\m_h^{n+1} + \m_h^{n-1}}{2} \times \tilde{\mmu}_h^n$ is orthogonal to $\tilde{\mmu}_h^n$, at a point-wise level:
\begin{equation}
   \langle \frac{\m_h^{n+1} + \m_h^{n-1}}{2} \times \tilde{\mmu}_h^n ,
	 \tilde{\mmu}_h^n \rangle = 0 .   \label{convergence-4}
\end{equation}
The second term on the right hand side of~\eqref{convergence-1} contains three parts:
\begin{align}
  &
   \langle \frac{\e_h^{n+1} + \e_h^{n-1}}{2} \times \underline{\mmu}_h^n ,
        \tilde{\mmu}_h^n \rangle  = I_1 + I_2 + I_3 ,  \label{convergence-5-1}
\\
  &
  I_1 =  \alpha \langle \frac{\e_h^{n+1} + \e_h^{n-1}}{2} \times \underline{\mmu}_h^n ,
         \frac{\e_h^{n+1} - \e_h^{n-1}}{2k}  \rangle ,   \label{convergence-5-2}
 \\
   &
  I_2 =  \alpha \tau \langle \frac{\e_h^{n+1} + \e_h^{n-1}}{2} \times \underline{\mmu}_h^n ,
         \frac{\e_h^{n+1} - 2 \e_h^{n} + \e_h^{n-1}}{k^2}  \rangle  ,  \label{convergence-5-3}
\\
  &
   I_3 =  \langle \frac{\e_h^{n+1} + \e_h^{n-1}}{2} \times \underline{\mmu}_h^n ,
    - \Delta_h \Big(\frac{\e_h^{n+1} + \e_h^{n-1}}{2} \Big)   \rangle  .  \label{convergence-5-4}
\end{align}
The first inner product, $I_1$, could be bounded in a straightforward way, with the help of discrete H\"older inequality:
\begin{align}
   I_1 =  & \alpha \langle \frac{\e_h^{n+1} + \e_h^{n-1}}{2} \times \underline{\mmu}_h^n ,
         \frac{\e_h^{n+1} - \e_h^{n-1}}{2k}  \rangle   \nonumber
\\
    \le & \frac{\alpha}{4}  \| \e_h^{n+1} + \e_h^{n-1}  \|_2
    \cdot  \| \underline{\mmu}_h^n  \|_\infty
    \cdot  \| \frac{\e_h^{n+1} - \e_h^{n-1}}{k} \|_2    \nonumber
\\
    \le  & {\mathcal C}  \| \e_h^{n+1} + \e_h^{n-1}  \|_2
    \cdot  \| \frac{\e_h^{n+1} - \e_h^{n-1}}{k} \|_2   \nonumber
\\
  \le &
    {\mathcal C}  ( \| \e_h^{n+1} \|_2^2 + \| \e_h^{n-1}  \|_2^2
    + \| \frac{\e_h^{n+1} - \e_h^{n-1}}{k} \|_2^2 ) .   \label{convergence-6}
\end{align}
For the second inner product, $I_2$,  we denote $\g_h^n := \frac{\e_h^{n+1} + \e_h^{n-1}}{2} \times \underline{\mmu}_h^n$. An application of point-wise identity~\eqref{lem 2-0} (in~\cref{lem 2}) reveals that
\begin{align}
  I_2 =  & \alpha \tau \langle \g_h^n ,
         \frac{\e_h^{n+1} - 2 \e_h^{n} + \e_h^{n-1}}{k^2}  \rangle  \nonumber
\\
	= & - \alpha \tau \langle \frac{ \e_h^n - \e_h^{n-1} }{k}  , \frac{ \g_h^n - \g_h^{n-1} }{k}  \rangle \nonumber
\\
  &
	+ \frac{\alpha \tau}{k} \Big( \langle \frac{\e_h^{n+1} - \e_h^n}{k} , \g_h^n \rangle
	-  \langle \frac{\e_h^n - \e_h^{n-1}}{k} , \g_h^{n-1}  \rangle \Big)  .
	\label{convergence-7-1} 	
\end{align}
Meanwhile, the following expansion is observed:
\begin{align}
   \frac{ \g_h^n - \g_h^{n-1} }{k}
   =   &  \frac14 ( \frac{\e_h^{n+1} - \e_h^n}{k} + \frac{\e_h^{n-1} - \e_h^{n-2}}{k} )
   \times ( \underline{\mmu}_h^n +  \underline{\mmu}_h^{n-1} )  \nonumber
\\
  &
    +  \frac{\e_h^{n+1} + \e_h^n + \e_h^{n-1} + \e_h^{n-2}}{4}
   \times \frac{ \underline{\mmu}_h^n  - \underline{\mmu}_h^{n-1} }{k} .
   \label{convergence-7-2} 	
\end{align}
This in turn indicates the associated estimate:
\begin{align}
   \| \frac{ \g_h^n - \g_h^{n-1} }{k}  \|_2
   \le   &  \frac14 ( \| \frac{\e_h^{n+1} - \e_h^n}{k} \|_2 + \| \frac{\e_h^{n-1} - \e_h^{n-2}}{k} \|_2 )
   \cdot ( \| \underline{\mmu}_h^n \|_\infty +  \| \underline{\mmu}_h^{n-1} \|_\infty )  \nonumber
\\
  &
    +  \frac{ \| \e_h^{n+1} \|_2 + \| \e_h^n \|_2 + \| \e_h^{n-1} \|_2 + \| \e_h^{n-2} \|_2 }{4}
   \cdot \| \frac{ \underline{\mmu}_h^n  - \underline{\mmu}_h^{n-1} }{k} \|_\infty  \nonumber
 \\
   \le &
   {\mathcal C}  \Big( \| \frac{\e_h^{n+1} - \e_h^n}{k} \|_2
   + \| \frac{\e_h^{n-1} - \e_h^{n-2}}{k} \|_2  \nonumber
 \\
   &  \qquad
   + \| \e_h^{n+1} \|_2 + \| \e_h^n \|_2 + \| \e_h^{n-1} \|_2 + \| \e_h^{n-2} \|_2 \Big)  ,
   \label{convergence-7-3} 	
\end{align}
in which the bound~\eqref{bound-1} has been applied. Going back to~\eqref{convergence-7-1}, we see that
\begin{align}
   - \alpha \tau \langle \frac{ \e_h^n - \e_h^{n-1} }{k}  , \frac{ \g_h^n - \g_h^{n-1} }{k}  \rangle
   \le &
    \alpha \tau \| \frac{ \e_h^n - \e_h^{n-1} }{k}  \|_2 \cdot
   \| \frac{ \g_h^n - \g_h^{n-1} }{k}  \|_2   \nonumber
 \\
   \le &
   {\mathcal C}  \Big( \| \frac{\e_h^{n+1} - \e_h^n}{k} \|_2
   + \| \frac{\e_h^{n-1} - \e_h^{n-2}}{k} \|_2
   + \| \e_h^{n+1} \|_2   \nonumber
 \\
   &  \quad
   + \| \e_h^n \|_2 + \| \e_h^{n-1} \|_2 + \| \e_h^{n-2} \|_2 \Big)
   \| \frac{ \e_h^n - \e_h^{n-1} }{k}  \|_2  \nonumber
\\
  \le &
   {\mathcal C}  \Big( \| \frac{\e_h^{n+1} - \e_h^n}{k} \|_2^2
   + \| \frac{\e_h^{n-1} - \e_h^{n-2}}{k} \|_2^2
   + \| \e_h^{n+1} \|_2^2   \nonumber
 \\
   &  \quad
   + \| \e_h^n \|_2^2 + \| \e_h^{n-1} \|_2^2 + \| \e_h^{n-2} \|_2^2
   + \| \frac{ \e_h^n - \e_h^{n-1} }{k}  \|_2^2 \Big) ,   \label{convergence-7-4}
\\
  I_2  \le &
    {\mathcal C}  \Big( \| \frac{\e_h^{n+1} - \e_h^n}{k} \|_2^2
   + \| \frac{\e_h^{n-1} - \e_h^{n-2}}{k} \|_2^2
   + \| \e_h^{n+1} \|_2^2   \nonumber
 \\
   &  \quad
   + \| \e_h^n \|_2^2 + \| \e_h^{n-1} \|_2^2 + \| \e_h^{n-2} \|_2^2
   + \| \frac{ \e_h^n - \e_h^{n-1} }{k}  \|_2^2 \Big)  \nonumber
\\
  &
  + \frac{\alpha \tau}{k} \Big( \langle \frac{\e_h^{n+1} - \e_h^n}{k} , \g_h^n \rangle
	-  \langle \frac{\e_h^n - \e_h^{n-1}}{k} , \g_h^{n-1}  \rangle \Big) .
    \label{convergence-7-5}    	
\end{align}
For the third inner product part, $I_3$, an application of summation by parts formula gives
\begin{align}
   I_3 =  & \langle \frac{\e_h^{n+1} + \e_h^{n-1}}{2} \times \underline{\mmu}_h^n ,
    - \Delta_h \Big(\frac{\e_h^{n+1} + \e_h^{n-1}}{2} \Big)   \rangle   \nonumber
\\
  = &
     \langle \nabla_h \Big( \frac{\e_h^{n+1} + \e_h^{n-1}}{2} \times \underline{\mmu}_h^n \Big) ,
    \nabla_h \Big(\frac{\e_h^{n+1} + \e_h^{n-1}}{2} \Big)   \rangle  .
     \label{convergence-8-1}
\end{align}
Meanwhile, we make use of the preliminary inequality~\eqref{lem 1-0} (in~\cref{lem 1}) and get
\begin{align}
         &
	\| \nabla_h \Big( \frac{\e_h^{n+1} + \e_h^{n-1}}{2} \times \underline{\mmu}_h^n \Big)  \|_2
	\nonumber
\\
	\le & \mathcal{C} \Big( \| \frac{\e_h^{n+1} + \e_h^{n-1}}{2} \|_2
	\cdot \|\nabla_h \underline{\mmu}_h^n \|_\infty +
	\| \underline{\mmu}_h^n \|_{\infty}
	\cdot \| \nabla_h ( \frac{\e_h^{n+1} + \e_h^{n-1}}{2} ) \|_2 \Big)  \nonumber
\\
     \le &
     \mathcal{C} \Big( \| \frac{\e_h^{n+1} + \e_h^{n-1}}{2} \|_2
	+  \| \nabla_h ( \frac{\e_h^{n+1} + \e_h^{n-1}}{2} ) \|_2 \Big)   \nonumber
\\
   \le  &
       \mathcal{C} \Big( \| \e_h^{n+1} \|_2 + \| \e_h^{n-1} \|_2
	+  \| \nabla_h \e_h^{n+1} \|_2 + \| \nabla_h \e_h^{n-1} \|_2 \Big)	. 	
	 \label{convergence-8-2}
\end{align}
Again, the bound~\eqref{bound-1} has been applied in the derivation. Therefore, the following estimate is available for $I_3$:
\begin{align}
   I_3 \le &
   \| \nabla_h \Big( \frac{\e_h^{n+1} + \e_h^{n-1}}{2} \times \underline{\mmu}_h^n \Big) \|_2
    \cdot \| \nabla_h \Big(\frac{\e_h^{n+1} + \e_h^{n-1}}{2} \Big) \|_2   \nonumber
\\
  \le &
   \mathcal{C} \Big( \| \e_h^{n+1} \|_2 + \| \e_h^{n-1} \|_2
	+  \| \nabla_h \e_h^{n+1} \|_2 + \| \nabla_h \e_h^{n-1} \|_2 \Big)	 \nonumber
\\
  &
	\cdot \Big(  \| \nabla_h \e_h^{n+1}\|_2 + \| \nabla_h \e_h^{n-1} \|_2 \Big)  \nonumber
\\
  \le &
   \mathcal{C} \Big( \| \e_h^{n+1} \|_2^2 + \| \e_h^{n-1} \|_2^2
	+  \| \nabla_h \e_h^{n+1} \|_2^2 + \| \nabla_h \e_h^{n-1} \|_2^2 \Big)  .
     \label{convergence-8-3}
\end{align}
The estimate of $I_3$ can also be obtained by a direct application of discrete H\"older inequality:
\begin{align}
  I_3 = & \langle \Big( \frac{\e_h^{n+1} + \e_h^{n-1}}{2} \times \nabla_h \underline{\mmu}_h^n \Big) ,
    \nabla_h \Big(\frac{\e_h^{n+1} + \e_h^{n-1}}{2} \Big)   \rangle \nonumber\\
  \leq & \frac 1{4}
   \| \e_h^{n+1} + \e_h^{n-1} \|_2 \cdot \|\nabla_h \underline{\mmu}_h^n \|_{\infty}
    \cdot \| \nabla_h (\e_h^{n+1} + \e_h^{n-1}) \|_2   \nonumber\\
    \leq &
   \mathcal{C} \Big( \| \e_h^{n+1} \|_2^2 + \| \e_h^{n-1} \|_2^2
	+  \| \nabla_h \e_h^{n+1} \|_2^2 + \| \nabla_h \e_h^{n-1} \|_2^2 \Big)  .
\label{convergence-8-4}
\end{align}

A substitution of~\eqref{convergence-6}, \eqref{convergence-7-5} and \eqref{convergence-8-4} into \eqref{convergence-5-1} yields the following bound:
\begin{align}
  &
   \langle \frac{\e_h^{n+1} + \e_h^{n-1}}{2} \times \underline{\mmu}_h^n ,
        \tilde{\mmu}_h^n \rangle  = I_1 + I_2 + I_3  \nonumber
\\
   \le  &
       \mathcal{C} \Big( \| \frac{\e_h^{n+1} - \e_h^n}{k} \|_2^2
   + \| \frac{\e_h^n - \e_h^{n-1}}{k} \|_2^2
   +  \| \frac{\e_h^{n-1} - \e_h^{n-2}}{k} \|_2^2   \nonumber
\\
  & \quad
     +  \| \e_h^{n+1} \|_2^2 +  \| \e_h^n \|_2^2 + \| \e_h^{n-1} \|_2^2
     +  \| \e_h^{n-2} \|_2 ^2
	+  \| \nabla_h \e_h^{n+1} \|_2^2 + \| \nabla_h \e_h^{n-1} \|_2^2 \Big)   \nonumber
\\
  &
  + \frac{\alpha \tau}{k} \Big( \langle \frac{\e_h^{n+1} - \e_h^n}{k} , \g_h^n \rangle
	-  \langle \frac{\e_h^n - \e_h^{n-1}}{k} , \g_h^{n-1}  \rangle \Big) .  	
       \label{convergence-9}
\end{align}

The third term on the right hand side of~\eqref{convergence-1} could be analyzed in a similar fashion:
\begin{align}
  &
   \langle \tau^n , \tilde{\mmu}_h^n \rangle  = I_4 + I_5 + I_6 ,  \label{convergence-10-1}
\\
  &
  I_4 =  \alpha  \langle \tau^n ,
         \frac{\e_h^{n+1} - \e_h^{n-1}}{2k}  \rangle ,   \quad
  I_5 =  \alpha \tau \langle \tau^n ,
         \frac{\e_h^{n+1} - 2 \e_h^{n} + \e_h^{n-1}}{k^2}  \rangle  ,  \label{convergence-10-2}
\\
  &
   I_6 =  \langle \tau^n ,
    - \Delta_h \Big(\frac{\e_h^{n+1} + \e_h^{n-1}}{2} \Big)   \rangle  ,   \label{convergence-10-3}
\end{align}
\begin{align}
   I_4 =  & \alpha \langle \tau^n ,
         \frac{\e_h^{n+1} - \e_h^{n-1}}{2k}  \rangle
          \le  \frac{\alpha}{2}  \| \tau^n  \|_2
    \cdot  \| \frac{\e_h^{n+1} - \e_h^{n-1}}{k} \|_2    \nonumber
\\
  \le &
    \frac{\alpha}{4}  ( \| \tau^n \|_2^2
    + \| \frac{\e_h^{n+1} - \e_h^{n-1}}{k} \|_2^2 ) ,  \label{convergence-11}
\end{align}
\begin{align}
  I_5 =  & \alpha \tau \langle \tau^n ,
         \frac{\e_h^{n+1} - 2 \e_h^{n} + \e_h^{n-1}}{k^2}  \rangle  \nonumber
\\
  = &
      - \alpha \tau \langle \frac{ \e_h^n - \e_h^{n-1} }{k}  ,
      \frac{ \tau^n - \tau^{n-1} }{k}  \rangle \nonumber
\\
  &
	+ \frac{\alpha \tau}{k} \Big( \langle \frac{\e_h^{n+1} - \e_h^n}{k} , \tau^n \rangle
	-  \langle \frac{\e_h^n - \e_h^{n-1}}{k} , \tau^{n-1}  \rangle \Big) ,
	\label{convergence-12-1} 	
\end{align}
\begin{align}
  &
      -  \langle \frac{ \e_h^n - \e_h^{n-1} }{k}  ,
      \frac{ \tau^n - \tau^{n-1} }{k}  \rangle
      \le   \| \frac{ \e_h^n - \e_h^{n-1} }{k} \|_2 \cdot
      \| \frac{ \tau^n - \tau^{n-1} }{k}  \|_2  \nonumber
\\
   \le &
      {\mathcal C} ( k^2 + h^2 ) \| \frac{ \e_h^n - \e_h^{n-1} }{k} \|_2
	\le {\mathcal C} ( k^4 + h^4 ) + \frac12 \| \frac{ \e_h^n - \e_h^{n-1} }{k} \|_2^2     ,
	\label{convergence-12-2} 	
\end{align}
\begin{align}
  I_5 \le  &
    {\mathcal C} ( k^4 + h^4 )
    + \frac{\alpha \tau}{2} \| \frac{ \e_h^n - \e_h^{n-1} }{k} \|_2^2  \nonumber
\\
  &
	+ \frac{\alpha \tau}{k} \Big( \langle \frac{\e_h^{n+1} - \e_h^n}{k} , \tau^n \rangle
	-  \langle \frac{\e_h^n - \e_h^{n-1}}{k} , \tau^{n-1}  \rangle \Big) ,
	\label{convergence-12-3} 	
\end{align}
\begin{align}
   I_6 =  & \langle \tau^n ,
    - \Delta_h \Big( \frac{\e_h^{n+1} + \e_h^{n-1}}{2} \Big)   \rangle
  =  \langle \nabla_h \tau^n,
     \nabla_h \Big( \frac{\e_h^{n+1} + \e_h^{n-1}}{2} \Big)   \rangle   \nonumber
\\
   \le &
     \|  \nabla_h \tau^n \|_2 \cdot
     \| \nabla_h \Big(\frac{\e_h^{n+1} + \e_h^{n-1}}{2} \Big) \|_2
  \le
    {\mathcal C} ( k^2 + h^2 )
     \| \nabla_h \Big(\frac{\e_h^{n+1} + \e_h^{n-1}}{2} \Big) \|_2   \nonumber
\\
     \le  &
     {\mathcal C} ( k^4 + h^4 )
     + \frac12 \Big( \| \nabla_h \e_h^{n+1} \|_2^2
    + \| \nabla_h \e_h^{n-1} \|_2^2  \Big) .
     \label{convergence-13}
\end{align}
Notice that the truncation error estimate~\eqref{truncation error-1} has been repeatedly applied in the derivation. Going back to~\eqref{convergence-10-1}, we obtain
\begin{align}
  &
   \langle \tau^n , \tilde{\mmu}_h^n \rangle  = I_4 + I_5 + I_6  \nonumber
\\
  \le &
   {\mathcal C} ( k^4 + h^4 )
   + \frac{\alpha}{2} \| \frac{ \e_h^{n+1} - \e_h^n }{k} \|_2^2
   + \frac{\alpha (\tau +1)}{2} \| \frac{ \e_h^n - \e_h^{n-1} }{k} \|_2^2    \nonumber
\\
  &
     + \frac12 \Big( \| \nabla_h \e_h^{n+1} \|_2^2
    + \| \nabla_h \e_h^{n-1} \|_2^2  \Big)    \nonumber
\\
  &
  + \frac{\alpha \tau}{k} \Big( \langle \frac{\e_h^{n+1} - \e_h^n}{k} , \tau^n \rangle
	-  \langle \frac{\e_h^n - \e_h^{n-1}}{k} , \tau^{n-1}  \rangle \Big) . \label{convergence-14}
\end{align}

Finally, a substitution of~\eqref{convergence-3-1}-\eqref{convergence-3-2}, \eqref{convergence-4}, \eqref{convergence-9} and \eqref{convergence-14} into \eqref{convergence-1} leads to the following inequality:
\begin{align}
  &
  \frac{1}{k} ( E_{\e, h}^{n+1} - E_{\e, h}^n )
   + \frac{\alpha}{4 k^2} \| \e_h^{n+1} - \e_h^{n-1} \|_2^2  \nonumber
\\
  \le &
   {\mathcal C} ( k^4 + h^4 )
   + \mathcal{C} \Big( \| \frac{\e_h^{n+1} - \e_h^n}{k} \|_2^2
   + \| \frac{\e_h^n - \e_h^{n-1}}{k} \|_2^2
   +  \| \frac{\e_h^{n-1} - \e_h^{n-2}}{k} \|_2^2   \nonumber
\\
  & \quad
     +  \| \e_h^{n+1} \|_2^2 +  \| \e_h^n \|_2^2 + \| \e_h^{n-1} \|_2^2
     +  \| \e_h^{n-2} \|_2 ^2
	+  \| \nabla_h \e_h^{n+1} \|_2^2 + \| \nabla_h \e_h^{n-1} \|_2^2 \Big)   \nonumber
\\
  &
  + \frac{\alpha \tau}{k} \Big( \langle \frac{\e_h^{n+1} - \e_h^n}{k} , \g_h^n + \tau^n \rangle
	-  \langle \frac{\e_h^n - \e_h^{n-1}}{k} , \g_h^{n-1}  + \tau^{n-1} \rangle \Big) .  	
       \label{convergence-15-1}
\end{align}
Subsequently, a summation in time yields
\begin{align}
   E_{\e, h}^{n+1}  \le &
   E_{\e, h}^2 + {\mathcal C} T ( k^4 + h^4 )
   + \mathcal{C} k \Big( \sum_{j=0}^n \| \frac{\e_h^{j+1} - \e_h^j}{k} \|_2^2
   +  \sum_{j=0}^{n+1} ( \| \e_h^j \|_2^2 + \| \nabla_h \e_h^j \|_2^2 )  \Big)   \nonumber
\\
  &
  +  \alpha \tau \Big( \langle \frac{\e_h^{n+1} - \e_h^n}{k} , \g_h^n + \tau^n \rangle
	-  \langle \frac{\e_h^2 - \e_h^1}{k} , \g_h^1  + \tau^1 \rangle \Big) .  	
       \label{convergence-15-2}
\end{align}
For the term $\alpha \tau \langle \frac{\e_h^{n+1} - \e_h^n}{k} , \g_h^n + \tau^n \rangle$, the following estimate could be derived
\begin{align}
  &
   \alpha \tau \langle \frac{\e_h^{n+1} - \e_h^n}{k} , \g_h^n + \tau^n \rangle
   \le  \frac{\alpha \tau}{4}  \| \frac{\e_h^{n+1} - \e_h^n}{k} \|_2^2
   + 2 \alpha \tau  ( \| \g_h^n \|_2^2 + \| \tau^n  \|_2^2  ) ,   \label{convergence-15-3}
\\
  &
  \| \g_h^n \|_2 = \| \frac{\e_h^{n+1} + \e_h^{n-1}}{2} \times \underline{\mmu}_h^n  \|_2
  \le \| \frac{\e_h^{n+1} + \e_h^{n-1}}{2} \|_2 \cdot \| \underline{\mmu}_h^n  \|_\infty  \nonumber
\\
  & \qquad \, \,
  \le {\mathcal C} \| \frac{\e_h^{n+1} + \e_h^{n-1}}{2} \|_2
  \le {\mathcal C} ( \| \e_h^{n+1} \|_2 + \| \e_h^{n-1} \|_2  ) ,
    \label{convergence-15-4}
\end{align}
in which the bound~\eqref{bound-1} has been used again. Then we get
\begin{align}
  \alpha \tau \langle \frac{\e_h^{n+1} - \e_h^n}{k} , \g_h^n + \tau^n \rangle
   \le  & \frac{\alpha \tau}{4}  \| \frac{\e_h^{n+1} - \e_h^n}{k} \|_2^2
   + 2 \alpha \tau   \| \tau^n  \|_2^2 \nonumber
 \\
   &
   + {\mathcal C} ( \| \e_h^{n+1} \|_2^2 + \| \e_h^{n-1} \|_2^2  )  \nonumber
\\
  \le &
  \frac12 E_{\e, h}^{n+1}
   + 2 \alpha \tau   \| \tau^n  \|_2^2
   + {\mathcal C} ( \| \e_h^{n+1} \|_2^2 + \| \e_h^{n-1} \|_2^2  )  ,
  \label{convergence-15-5}
\end{align}
in which the expansion identity, $E_{\e, h}^{n+1} =
\frac{\alpha \tau}{2}  \| \frac{\e_h^{n+1} - \e_h^n}{k} \|_2^2
  + \frac14 ( \| \nabla_h \e_h^{n+1} \|_2^2  + \| \nabla_h \e_h^n \|_2^2 )$ (given by~\eqref{convergence-3-2}), has been applied. Its substitution into~\eqref{convergence-15-2} gives
\begin{align}
   E_{\e, h}^{n+1}  \le &
   2 E_{\e, h}^2 + {\mathcal C} T ( k^4 + h^4 )
   + \mathcal{C} k \Big( \sum_{j=0}^n \| \frac{\e_h^{j+1} - \e_h^j}{k} \|_2^2
   +  \sum_{j=0}^{n+1} ( \| \e_h^j \|_2^2 + \| \nabla_h \e_h^j \|_2^2 ) \Big)    \nonumber
\\
  &
  + {\mathcal C} ( \| \e_h^{n+1} \|_2^2 + \| \e_h^{n-1} \|_2^2  )
  + 4 \alpha \tau   \| \tau^n  \|_2^2
  -  2 \alpha \tau \langle \frac{\e_h^2 - \e_h^1}{k} , \g_h^1  + \tau^1 \rangle .  	
       \label{convergence-15-6}
\end{align}
Moreover, an application of the preliminary error estimate~\eqref{lem 3-0} (in Lemma~\ref{lem 3}) leads to
\begin{align}
   E_{\e, h}^{n+1}  \le &
   2 E_{\e, h}^2 + {\mathcal C} T ( k^4 + h^4 )
   + \mathcal{C} (T^2 + 1) k \sum_{j=0}^n \| \frac{\e_h^{j+1} - \e_h^j}{k} \|_2^2
   + {\mathcal C}  T \| \e_h^0 \|_2^2  \nonumber
\\
  &
  + \mathcal{C} k  \sum_{j=0}^{n+1} \| \nabla_h \e_h^j \|_2^2
  + 4 \alpha \tau   \| \tau^n  \|_2^2
  -  2 \alpha \tau \langle \frac{\e_h^2 - \e_h^1}{k} , \g_h^1  + \tau^1 \rangle ,   	
       \label{convergence-15-7}
\end{align}
in which we have made use of the following fact:
	\begin{align}
	k \sum_{j=0}^{n+1} \| \e_h^j \|_2^2 	
	\le & k \cdot (n+1) \Big( 2 \| \e_h^0 \|_2^2 + 2 T k \sum_{j=0}^n
	\| \frac{\e_h^{j+1} - \e_h^j}{k} \|_2^2 \Big)  \nonumber
\\
  \le &
	 2 T \| \e_h^0 \|_2^2 + 2 T^2 k \sum_{j=0}^n
	\| \frac{\e_h^{j+1} - \e_h^j}{k} \|_2^2  .  \label{convergence-15-8}
	\end{align}
In addition, for the initial error quantities, the following estimates are available:
\begin{align}
  &
  E_{\e, h}^2 =
  \frac{\alpha \tau}{2}  \| \frac{\e_h^2 - \e_h^1}{k} \|_2^2
  + \frac14 ( \| \nabla_h \e_h^2 \|_2^2  + \| \nabla_h \e_h^1 \|_2^2 )
  \le {\mathcal C} (k^4 + h^4) ,
   \label{convergence-15-9-1}
\\
  &
   \| \e_h^0 \|_2^2 \le   {\mathcal C} (k^4 + h^4) ,  \label{convergence-15-9-2}
\\
  &
  4 \alpha \tau   \| \tau^n  \|_2^2   \le   {\mathcal C} (k^4 + h^4) ,
  \label{convergence-15-9-3}
\\
  &
   \| \g_h^1 \|_2 = \| \frac{\e_h^2 + \e_h^0}{2} \times \underline{\mmu}_h^1  \|_2
  \le \| \frac{\e_h^2 + \e_h^0}{2} \|_2 \cdot \| \underline{\mmu}_h^1  \|_\infty
   \le   {\mathcal C} (k^2 + h^2) ,
  \label{convergence-15-9-4}
\\
  &
  -  2 \alpha \tau \langle \frac{\e_h^2 - \e_h^1}{k} , \g_h^1  + \tau^1 \rangle
  \le  2 \alpha \tau \| \frac{\e_h^2 - \e_h^1}{k} \|_2 \cdot ( \| \g_h^1  \|_2 + \| \tau^1 \|_2 )
  \nonumber
\\
  & \qquad \qquad \qquad
   \le   {\mathcal C} (k^4 + h^4) ,
  \label{convergence-15-9-5}
\end{align}
which comes from the assumption in Theorem~\ref{cccthm2}. Then we arrive at
\begin{align}
   E_{\e, h}^{n+1}  \le &
     \mathcal{C} (T^2 + 1) k \sum_{j=0}^n \| \frac{\e_h^{j+1} - \e_h^j}{k} \|_2^2
   +  \mathcal{C} k \sum_{j=0}^{n+1} \| \nabla_h \e_h^j \|_2^2
   \nonumber
    + {\mathcal C} ( T + 1) ( k^4 + h^4 )     \nonumber
\\
   \le &
    {\mathcal C} ( T + 1) ( k^4 + h^4 )
   + \mathcal{C} (T^2 + 1) k \sum_{j=0}^n E_{\e, h}^{j+1} ,
       \label{convergence-15-10}
\end{align}
in which the fact that $E_{\e, h}^{j+1} =
  \frac{\alpha \tau}{2}  \| \frac{\e_h^{j+1} - \e_h^j}{k} \|_2^2
  + \frac14 ( \| \nabla_h \e_h^{j+1} \|_2^2 + \| \nabla_h \e_h^j \|_2^2 ) $, has been used. In turn, an application of discrete Gronwall inequality results in the desired convergence estimate:
	\begin{align}
	  &
	E_{\e, h}^{n+1} \le \mathcal{C}Te^{\mathcal{C}T} (k^4+h^4), \quad \text{for all } (n+1): n+1 \le \left\lfloor\frac{T}{k}\right\rfloor , \label{convergence-16-1}
\\
  &
	\| \frac{\e_h^{n+1} - \e_h^n}{k} \|_2 + \|\nabla_h \e_h^{n+1} \|_2
	\le \mathcal{C}(k^2+h^2) . \label{convergence-16-2}	
	\end{align}
Again, an application of the preliminary error estimate~\eqref{lem 3-0} (in Lemma~\ref{lem 3}) implies that
	\begin{align}
	\| \e_h^{n+1} \|_2^2 	
	\le & 2 \| \e_h^0 \|_2^2 + 2 T k \sum_{j=0}^n
	\| \frac{\e_h^{j+1} - \e_h^j}{k} \|_2^2 \le  \mathcal{C}(k^4+h^4) , \nonumber
\\
	\mbox{so that} \, \, \, \| \e_h^{n+1} \|_2 \le  & \mathcal{C}(k^2+h^2) . 	
	 \label{convergence-16-3}
	\end{align}
A combination of~\eqref{convergence-16-2} and \eqref{convergence-16-3} finishes the proof of Theorem~\ref{cccthm2}.

\section{A numerical solver for the nonlinear system}
\label{sec:solving}

It is clear that \Cref{alg:implicit-midpoint-algorithm} is a nonlinear scheme.
The following fixed-point iteration is employed to solve it.
\begin{algorithm}
  \label{alg:solving}
  Set $\m_h^{n+1,0} = 2\m_h^{n} - \m_h^{n-1}$ and $p = 0$.
  \begin{enumerate}[\rm(i)]
    \item Compute $\m_h^{n+1,p+1}$ such that
    \begin{multline}\label{equ:solving-in-iteration}
      \frac{\m_h^{n+1,p+1} - \m_h^{n-1}}{2k}= -\frac{\m_h^{n+1,p+1} + \m_h^{n-1}}{2}\times\Delta_h\left(\frac{\m_h^{n+1,p} + \m_h^{n-1}}{2}\right)\\+ \alpha\frac{\m_h^{n+1,p+1} + \m_h^{n-1}}{2}\times\left(\frac{\m_h^{n+1,p+1} - \m_h^{n-1}}{2k} + \tau\frac{\m_h^{n+1,p+1} - 2\m_h^{n} + \m_h^{n-1}}{k^2}\right). \\
    \end{multline}
    \item If $\| \m_h^{n+1, p+1} - \m_h^{n+1, p} \|_{2} \leq \epsilon$, then stop and set $\m_h^{n+1} = \m_h^{n+1, p+1}$.
    \item Set $p \leftarrow p+1$ and go to \rm{(i)}.
  \end{enumerate}
\end{algorithm}

Denote the operator
\begin{equation}
  \mathcal{L}^{p} = I - \alpha\m_h^{n-1}\times - \frac{2\alpha\tau}{k}\m_h^{n}\times - \frac{k}{2}\Delta_h(\m_h^{n+1,p}+\m_h^{n-1})\times,
\end{equation}
and make the fixed-point iteration solve the following equation
\begin{equation}
      \mathcal{L}^{p}\m_h^{n+1,p+1} = \m_h^{n-1} + \frac{2\alpha\tau}{k}\m_h^n\times\m_h^{n-1} - \frac{k}{2}\m_h^{n-1}\times\Delta_h(\m_h^{n+1,p}+\m_h^{n-1}) ,
\end{equation}
in its inner iteration. Under the condition $k \leq Ch^2$ with $C$ a constant, the following lemma confirms the convergence of \Cref{alg:solving}. 
For any $l\in L$ and owing to the property of $|\m_h(\x_l) | = 1$, it is clear that $0 < \| \m_h \|_{\infty} \leq 1$. For the discretized $\ell^2$ norm of $\hat{\m} \in C^3 ([0,T]; [C^0(\bar{\Omega})]^3) \cap C^2([0,T]; [C^2(\bar{\Omega})]^3) \cap L^{\infty}([0,T]; [C^4(\bar{\Omega})]^3)$, we have
\begin{equation}
  \|\nabla_h\hat{\m}\|_{2} \leq 2h^{-1}\|\hat{\m}\|_{2}.
\end{equation}
Then
\begin{multline}
  \|\Delta_h\hat{\m}\|^2_{2} = -\langle\nabla_h\hat{\m}, \nabla_h\Delta_h\hat{\m}\rangle \\ \leq \|\nabla_h\hat{\m}\|_{2}\|\nabla_h\Delta_h\hat{\m}\|_{2} \leq 2h^{-1}\|\nabla_h\hat{\m}\|_{2}\|\Delta_h\hat{\m}\|_{2},
\end{multline}
which in turn implies the following inverse inequality:
\begin{equation}
  \|\Delta_h\hat{\m}\|_{2} \leq 4h^{-2}\|\hat{\m}\|_{2}.
\end{equation}

\begin{lem}
  Let $|\m_h^{n-1}| = |\m_h^{n}| = 1$, there exists a constant $c_0$ such that $\|\m_h^{n-1}\|_{\infty}$, $\|\m_h^{n}\|_{\infty} \leq c_0$. The solution $\m_h^{n+1,p}$ calculated by \eqref{equ:solving-in-iteration} satisfies $|\m_h^{n+1,p}| = |\m_h^{n-1}|$ for $p = 1,2,\cdots$, which means that we can still find the constant $c_0 \leq 1$ satisfying $\|\m_h^{n+1,p}\|_{\infty} \leq c_0$. Then, for all $p \geq 1$, there exists a unique solution $\m_h^{n+1,p}$ in \eqref{equ:solving-in-iteration} and the following inequality is valid:
  \begin{equation}
    \|\m_h^{n+1,p+1} - \m_h^{n+1,p}\|_2 \leq 4 c_0 kh^{-2}\|\m^{n+1,p} - \m^{n+1,p-1}\|_2.
  \end{equation}
  \label{lem:iteration_convergence}
\end{lem}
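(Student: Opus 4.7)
The plan is to establish, in order, pointwise length preservation of each iterate, existence and uniqueness of $\m_h^{n+1,p+1}$, and finally the contraction inequality, all resting on the basic fact that a cross product is orthogonal to each of its arguments plus the inverse inequality $\|\Delta_h\hat{\m}\|_2\le 4h^{-2}\|\hat{\m}\|_2$ already recorded just before the lemma. For length preservation I would take the pointwise dot product of~\eqref{equ:solving-in-iteration} at node $\x_l$ with $\m_h^{n+1,p+1}(\x_l)+\m_h^{n-1}(\x_l)$: every cross-product term on the right has $\frac12(\m_h^{n+1,p+1}+\m_h^{n-1})$ as one of its factors and therefore vanishes at $\x_l$, leaving $\frac{1}{2k}(|\m_h^{n+1,p+1}|^2-|\m_h^{n-1}|^2)(\x_l)=0$. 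Induction on $p$ then yields $|\m_h^{n+1,p}(\x_l)|=1$ for all $l$, and hence $\|\m_h^{n+1,p}\|_\infty\le c_0\le 1$ for every $p\ge 1$.

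For existence and uniqueness I would work with the rearrangement $\mathcal{L}^p\m_h^{n+1,p+1}=$ RHS already displayed above the lemma, obtained via the cross-product identities $(\m_h^{n+1,p+1}+\m_h^{n-1})\times(\m_h^{n+1,p+1}-\m_h^{n-1})=2\,\m_h^{n-1}\times\m_h^{n+1,p+1}$ together with its analogue for the three-level stencil. Since $\mathcal{L}^p-I$ is a sum of cross-product operators acting on the unknown, pairing $\mathcal{L}^p\w=\mathbf{0}$ with $\w$ in the discrete inner product annihilates every such term pointwise and forces $\|\w\|_2^2=0$, giving injectivity and hence invertibility on the finite-dimensional grid space.

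For the contraction I set $\eta^{p+1}=\m_h^{n+1,p+1}-\m_h^{n+1,p}$ and $\xi^p=\m_h^{n+1,p}-\m_h^{n+1,p-1}$ and subtract~\eqref{equ:solving-in-iteration} at level $p$ from the one at level $p+1$. Using the same cross-product simplifications, the velocity and inertial contributions collapse to multiples of $\m_h^{n-1}\times\eta^{p+1}$ and $\m_h^n\times\eta^{p+1}$ (with no $\xi^p$ dependence), while the Laplacian piece splits via the identity $A_1\times B_1-A_2\times B_2=(A_1-A_2)\times B_1+A_2\times(B_1-B_2)$ into one term linear in $\eta^{p+1}$ and one term linear in $\Delta_h\xi^p$. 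Collecting the $\eta^{p+1}$-terms on the left yields the compact identity
\begin{equation*}
\mathcal{L}^p\eta^{p+1}=-\frac{k}{2}\,(\m_h^{n-1}+\m_h^{n+1,p})\times\Delta_h\xi^p.
\end{equation*}
Testing against $\eta^{p+1}$ annihilates every cross product inside $\mathcal{L}^p\eta^{p+1}$ and leaves $\|\eta^{p+1}\|_2^2$ on the left; Cauchy--Schwarz, the bound $\|\m_h^{n-1}+\m_h^{n+1,p}\|_\infty\le 2c_0$ from the first paragraph, and the recorded inverse inequality $\|\Delta_h\xi^p\|_2\le 4h^{-2}\|\xi^p\|_2$ then combine to give exactly $\|\eta^{p+1}\|_2\le 4c_0\,kh^{-2}\|\xi^p\|_2$. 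The main obstacle is the bookkeeping in this subtraction step, since $\mathcal{L}^p$ itself depends on $p$ through $\Delta_h(\m_h^{n+1,p}+\m_h^{n-1})$: rather than writing out $\mathcal{L}^p\m_h^{n+1,p+1}-\mathcal{L}^{p-1}\m_h^{n+1,p}$ directly, it is cleaner to subtract at the level of~\eqref{equ:solving-in-iteration} itself so that the operator mismatch $\mathcal{L}^p-\mathcal{L}^{p-1}=-\tfrac{k}{2}\Delta_h\xi^p\,\times$ combines with the change on the right-hand side to produce the compact identity above, after which the estimate is essentially one line.
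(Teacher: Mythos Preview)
Your proposal is correct and follows essentially the same route as the paper: pointwise dot product with $\m_h^{n+1,p+1}+\m_h^{n-1}$ for length preservation, orthogonality of the cross-product terms in $\mathcal{L}^p$ for unique solvability, and subtraction of successive iterates tested against $\eta^{p+1}$ together with the inverse inequality for the contraction. Your packaging of the subtracted equation as $\mathcal{L}^p\eta^{p+1}=-\tfrac{k}{2}(\m_h^{n-1}+\m_h^{n+1,p})\times\Delta_h\xi^p$ is a slightly cleaner presentation of the same algebra the paper writes out term by term.
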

\begin{proof}
  For any $\m_h\in\mathds{S}^2$, the following identity is clear:
  \begin{equation*}
    \langle \m_h, \mathcal{L}^{p}\m_h \rangle = 1,
  \end{equation*}
for all $p \geq 1$. Thus the operator $\mathcal{L}^{p}$ is positive definite for all $p \geq 1$, which provides the unique solvability of \eqref{equ:solving-in-iteration}.
   Taking the discrete inner product with \eqref{equ:solving-in-iteration} by $\m_h^{n+1,p+1} + \m_h^{n-1}$, we have $|\m_h^{n+1,p+1}| = 1$ in a point-wise sense, which means that the length of the magnetization is preserved at each step in the inner iteration. Thus, we can find a constant $c_0 \leq 1$ to control the $\ell^{\infty}$ norm of $\m_h^{n-1}$, $\m_h^{n}$ and $\m_h^{n+1,p}$ for $p=1,2,\cdots$ simultaneously.

  Subtraction of two subsequent equations in the fixed-point iteration yields
  \begin{multline*}
    \frac 1{2k}(\m_h^{n+1,p+1} - \m_h^{n+1,p}) = -\frac 1{4}(\m_h^{n+1,p+1} - \m_h^{n+1,p})\times\Delta_h\m_h^{n+1,p} \\- \frac 1{4}\m_h^{n+1,p}\times\Delta_h(\m_h^{n+1,p} - \m_h^{n+1,p-1}) \\- \frac 1{4}\m_h^n\times\Delta_h(\m^{n+1,p} - \m^{n+1,p-1}) \\- \frac 1{4}(\m_h^{n+1,p+1} - \m^{n+1,p})\times\Delta_h\m_h^{n-1} \\+ \frac{\alpha}{2k}\m_h^{n-1}\times(\m_h^{n+1,p+1} - \m_h^{n+1,p}) + \frac{\alpha\tau}{2k^2}\m_h^n\times(\m_h^{n+1,p+1} - \m_h^{n+1,p}).
  \end{multline*}
  Taking the inner product with $(\m_h^{n+1,p+1} - \m_h^{n+1,p})$ by the above equation produces
  \begin{align*}
    \|\m_h^{n+1,p+1} - \m_h^{n+1,p}\|_2 \leq &\frac{k}{2}\|\m_h^{n+1,p}\|_{\infty}\|\Delta_h(\m_h^{n+1,p}- \m_h^{n+1,p-1})\|_2 \\&+ \frac{k}{2}\|\m_h^n\|_{\infty}\|\Delta_h(\m_h^{n+1,p} - \m_h^{n+1,p-1})\|_2\\
    \leq &c_0 k\|\Delta_h(\m^{n+1,p} - \m^{n+1,p-1})\|_2.
  \end{align*}
In turn, the convergence result becomes
  \begin{equation}
    \|\m_h^{n+1,p+1} - \m_h^{n+1,p}\|_2 \leq 4 c_0 kh^{-2}\|\m_h^{n+1,p} - \m_h^{n+1,p-1}\|_2,
  \end{equation}
which completes the proof of Lemma~\ref{lem:iteration_convergence}.
\end{proof}

\section{Numerical experiments}
\label{sec:experiments}
\subsection{Accuracy tests}
Consider the 1-D iLLG equation
\begin{equation*}
  \partial_t\m = -\m\times\partial_{xx}\m + \alpha\m\times(\partial_t\m + \tau\partial_{tt}\m) + \mathbf{f} .
\end{equation*}
The exact solution is chosen to be $\m_{\mathrm{e}} = (\cos(\bar x)\sin(t^2), \sin(\bar x)\sin(t^2), \cos(t^2))^T$ with $\bar x = x^2(1-x)^2$, and the forcing term is given by $\mathbf{f} = \partial_t\m_{\mathrm{e}} + \m_{\mathrm{e}}\times\partial_{xx}\m_{\mathrm{e}}- \alpha\m_{\mathrm{e}}\times(\partial_t\m_{\mathrm{e}} + \tau\partial_{tt}\m_{\mathrm{e}})$. Fixing the tolerance $\epsilon = \textrm{1.0e-07}$ for the fixed-point iteration, we record the discrete $\ell^2$ and $\ell^{\infty}$ errors between the exact solution and numerical solution with a sequence of temporal step-size and spatial mesh-size. The parameters in the above 1-D equation are set as: $\alpha = 0.1$, $\tau = 10.0$, and the final time $T = 0.01$. The temporal step-sizes and spatial mesh-sizes are listed in the \Cref{tab:1dtime} and \Cref{tab:1dspace}.
\begin{table}[ht]
  \centering
  \caption{The discrete $\ell^2$ and $\ell^{\infty}$ errors in terms of the temporal step-size. The spatial mesh-size is fixed as $h = 0.001$ over $\Omega = (0,1)$ and the final time is $T = 0.01$.}
  \begin{tabular}{||c|c|c||}
    \hline
    $k$ & $\|\m_h - \m_e\|_2$ & $\|\m_h - \m_e\|_{\infty}$\\
    \hline
    T/40 & 1.2500e-11 & 1.2584e-11 \\
    T/60 & 5.6887e-12 & 5.6024e-12 \\
    T/80 & 3.2008e-12 & 3.1525e-12 \\
    T/100 & 2.0487e-12 & 2.0174e-12 \\
    \hline
    order & 1.97 & 2.00 \\
    \hline
  \end{tabular}
  \label{tab:1dtime}
\end{table}
\begin{table}[ht]
  \centering
  \caption{The discrete $\ell^2$ and $\ell^{\infty}$ error in terms of the spatial mesh-size. The parameters are set as: the temporal step-size $k = \textrm{2.0e-06}$, $\Omega = (0,1)$ and the final time $T = 0.5$.}
  \begin{tabular}{||c|c|c||}
    \hline
    $h$ & $\|\m_h - \m_e\|_2$ & $\|\m_h - \m_e\|_{\infty}$\\
    \hline
    1/20 & 1.9742e-05 & 2.5320e-05 \\
    1/40 & 4.9846e-06 & 6.3459e-06 \\
    1/60 & 2.2340e-06 & 2.8201e-06 \\
    1/80 & 1.2720e-06 & 1.5853e-06 \\
    \hline
    order & 1.98 & 2.00 \\
    \hline
  \end{tabular}
  \label{tab:1dspace}
\end{table}

In addition, the 3-D iLLG equation is also considered:
\begin{equation*}
  \partial_t\m = -\m\times\Delta\m + \alpha\m\times(\partial_t\m + \tau\partial_{tt}\m) + \mathbf{f} .
\end{equation*}
The exact solution is chosen to be $\m_{\mathrm{e}} = (\cos(\bar x\bar y\bar z)\sin(t^2), \sin(\bar x\bar y\bar z)\sin(t^2)), \cos(t^2))^T$ with $\bar y = y^2(1-y)^2$ and $\bar z = z^2(1-z)^2$, and the forcing term $\mathbf{f} = \partial_t\m_{\mathrm{e}} + \m_{\mathrm{e}}\times\Delta\m_{\mathrm{e}}- \alpha\m_{\mathrm{e}}\times(\partial_t\m_{\mathrm{e}} + \tau\partial_{tt}\m_{\mathrm{e}})$. Similarly, we record the discrete $\ell^2$ and $\ell^{\infty}$ errors between exact and numerical solutions with a sequence of temporal step-sizes and spatial mesh-sizes. The corresponding parameters are set as: $\alpha = 0.01$ and $\tau = 1000.0$. Besides, the final time of this simulation is given by $T = 0.01$, with the temporal step-size and spatial mesh-size listed in  \Cref{tab:3dtime} and \Cref{tab:3dspace}.

\begin{table}[ht]
  \centering
  \caption{The discrete $\ell^2$ and $\ell^{\infty}$ errors in terms of the temporal step-size. The spatial mesh-size is fixed as $h = 0.001$ and final time is $T = 0.01$.}
  \begin{tabular}{||c|c|c||}
    \hline
    $k$ & $\|\m_h - \m_e\|_2$ & $\|\m_h - \m_e\|_{\infty}$\\
    \hline
    T/100 & 1.2678e-05 & 1.2765e-05 \\
    T/120 & 8.8067e-06 & 8.8830e-06 \\
    T/140 & 6.4725e-06 & 6.5419e-06 \\
    T/160 & 4.9576e-06 & 5.0224e-06 \\
    \hline
    order & 2.00 & 1.98 \\
    \hline
  \end{tabular}
  \label{tab:3dtime}
\end{table}
\begin{table}[ht]
  \centering
  \caption{The discrete $\ell^2$ and $\ell^{\infty}$ errors in terms of spatial mesh-size. The temporal step-size is fixed as $k = \textrm{2.0e-06}$.}
  \begin{tabular}{||c|c|c||}
    \hline
    $h$ & $\|\m_h - \m_e\|_2$ & $\|\m_h - \m_e\|_{\infty}$\\
    \hline
    1/8 & 1.4392e-07 & 3.4940e-07 \\
    1/10 & 9.6832e-08 & 2.2864e-07 \\
    1/12 & 6.9825e-08 & 1.6079e-07 \\
    1/14 & 5.2828e-08 & 1.1895e-07 \\
    \hline
    order & 1.79 & 1.92 \\
    \hline
  \end{tabular}
  \label{tab:3dspace}
\end{table}

\subsection{Micromagnetics tests}
The inertial effect can be observed during the relaxation of a system with a non-equilibrium initialization. To visualize this, we conduct micromagnetics simulations for both the LLG equation and the iLLG equation.

In the following simulations, a 3-D domain $\Omega = [0,1]\times[0,1]\times[0,0.4]$ is uniformly discretized into $10\times10\times4$ cells, with uniform initialization $\m^{0} = (\sqrt{2}/2, \sqrt{2}/2, 0)^T$. For comparison, the LLG equation is discretized by the mid-point scheme proposed in \cite{bartels2006convergence} with the fixed-point iteration solver proposed in this work. The damping parameter is $\alpha = 0.5$ and the field is fixed as $\mathbf{H}_{\mathrm{e}} = (10, 0, 0)^T$, which indicates that the system shall converge to $\m = (1, 0, 0)^T$. Here the relaxation of the magnetization behavior controlled by the LLG equation is visualized in \Cref{fig:LLGSAM}.
\begin{figure}[ht]
  \centering
  \includegraphics[width=5in]{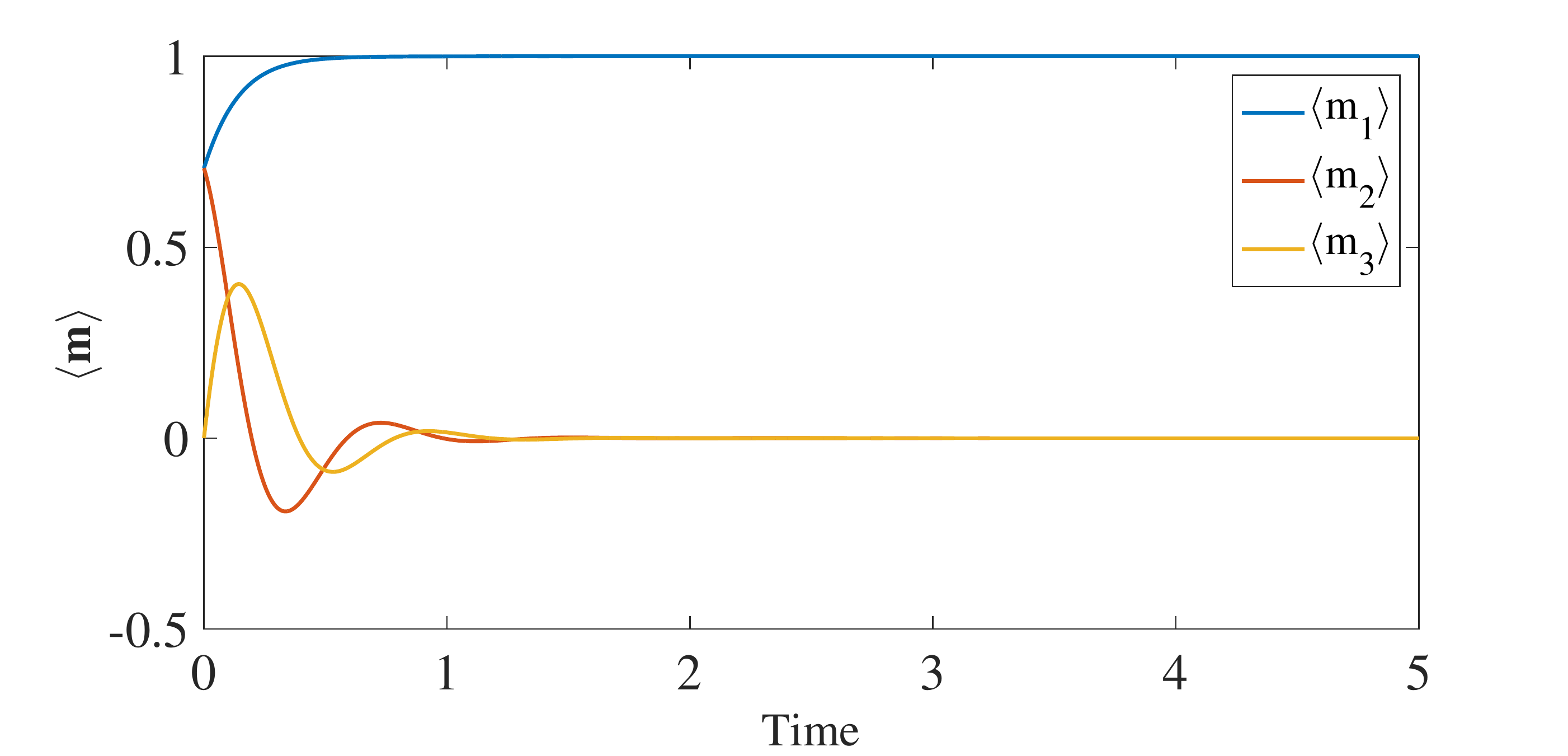}
  \caption{The relaxation of the spatially averaged magnetization controlled by the LLG equation. The final time is $T = 5.0$ with $k=0.001$, and the damping parameter is $\alpha = 0.5$.}
  \label{fig:LLGSAM}
\end{figure}

As for the counterpart of the LLG equation, wtih a given reference field $\mathbf{H}_{\mathrm{e}}$, the discrete energy of the iLLG equation becomes
\begin{multline}
  \mathbf{E}[\m^{n+1},\m^n] = \frac{1}{4} ( \| \nabla_h \m_h^{n+1} \|_2^2
      + \| \nabla_h \m_h^n \|_2^2 )
      + \\ \frac{\alpha \tau}{2} \Big\| \frac{\m_h^{n+1} - \m_h^n}{k} \Big\|_2^2 - \frac 1{2}\langle \m_h^{n+1}+\m_h^{n}, \mathbf{H}_{\mathrm{e}} \rangle.
\end{multline}
Setting the inertial parameter $\tau = 1.0$, the spatially averaged magnetization is recorded to depict the inertial effect in \Cref{fig:iLLGSAM}(a). Meanwhile, the energy decay is also numerically verified by \Cref{fig:iLLGSAM}(b). The inertial effect is observed at shorter timescales for magnetization dynamics during the relaxation of the system with a non-equilibrium initialization.

\begin{figure}[ht]
  \centering
  \subfloat[Spatially averaged magnetization]{\includegraphics[width=5in]{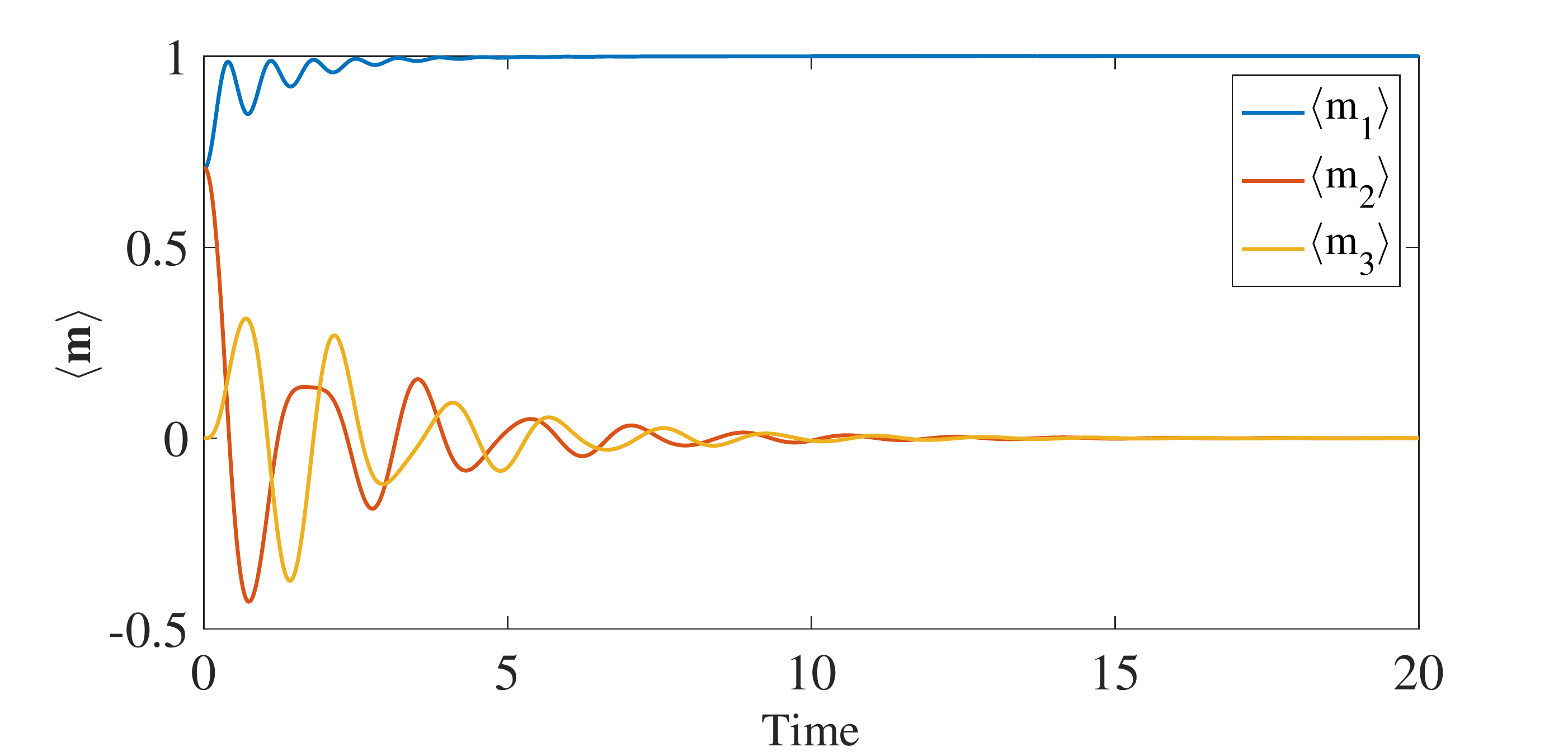}}
  \quad
  \subfloat[Energy decay]{\includegraphics[width=5in]{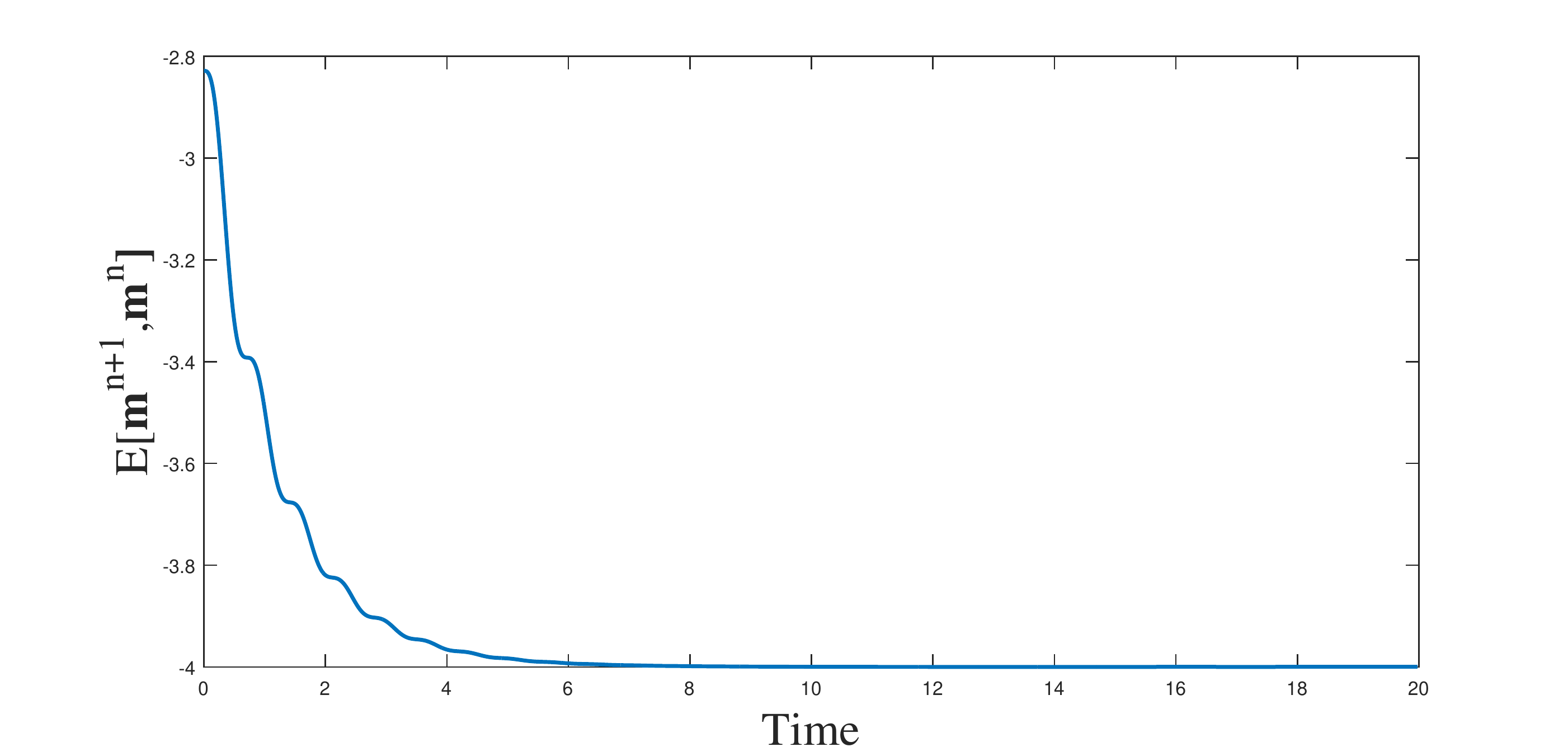}}
  \caption{The spatially averaged magnetization (A) and the energy evolution (B) in the iLLG equation. Parameters setting: $T = 20$, $k=0.02$, $\tau = 1.0$ and $\alpha = 0.5$.}
  \label{fig:iLLGSAM}
\end{figure}

Furthermore, the inertial effect can also be activated by an external perturbation applied to an equilibrium state. Here we set the damping parameter $\alpha = 0.02$ and $\tau = 0.5$, then the time step-size must be reduced to 0.001 with $T = 3.0$. For the equilibrium state $\m^0 = (1, 0, 0)^T$, the perturbation $4.0\times\sin(2\pi ft)$ is applied along $y$ direction over the time interval $[0, 0.05]$, with $f = 20$. The relaxation of the iLLG equation, revealed by the evolution of the spatially averaged magnetization, is visualized in \Cref{fig:iLLGSAMa}.
\begin{figure}[htbp]
  \centering
  \includegraphics[width=5in]{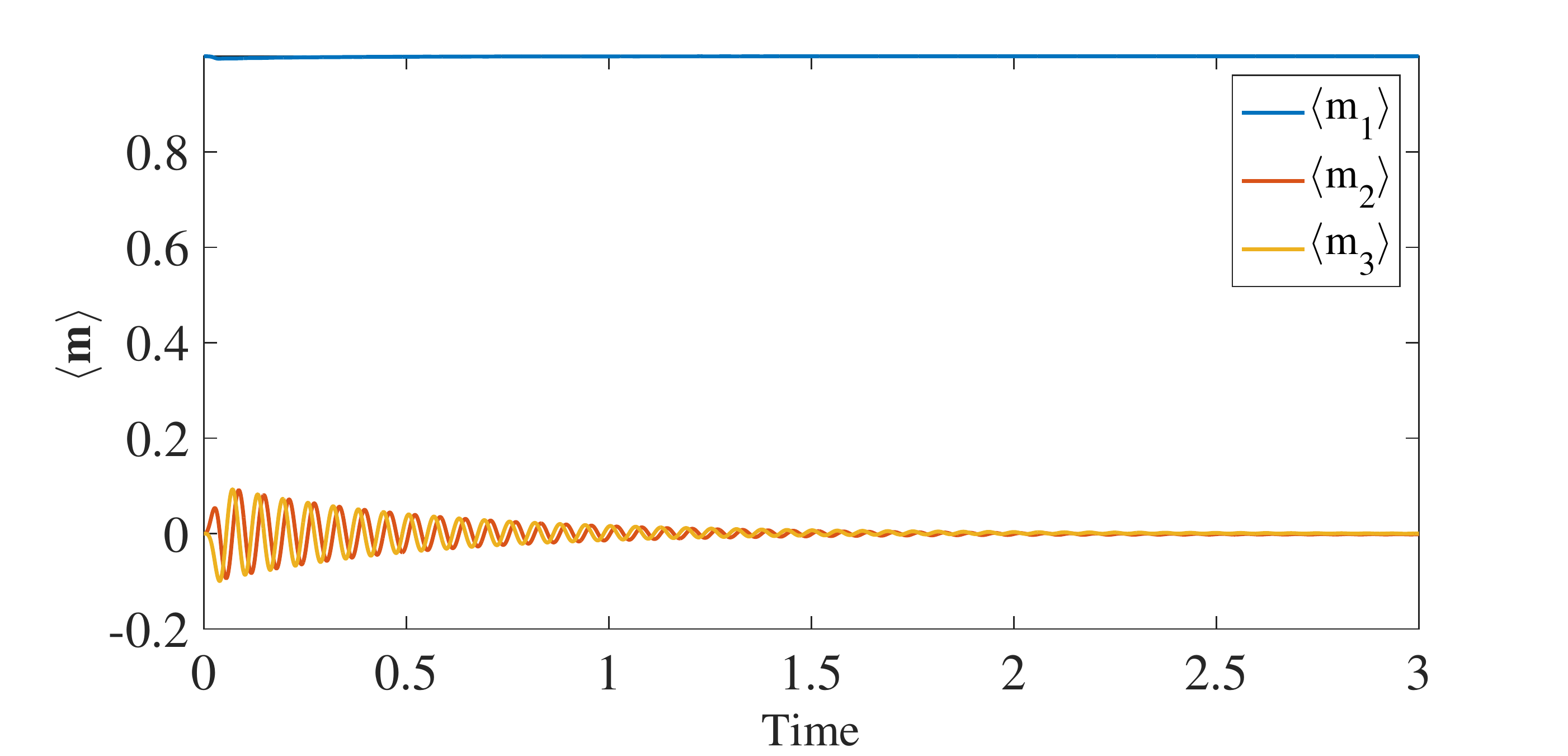}
  \caption{The response of the spatially averaged magnetization for the magnetic perturbation in the presence of the inertial effect. For the equilibrium initialization $\m^0 = (1, 0, 0)^T$, a perturbation $4.0\times\sin(2\pi ft)$ is applied along $y$ direction during time interval $0\sim0.05$, with $f = 20$. The basic simulation parameters are: $\alpha = 0.02$, $\tau = 0.5$, $T = 3.0$ and $k=0.001$.}
  \label{fig:iLLGSAMa}
\end{figure}

\section{Conclusion}
\label{sec:conclusion}

In this work, an implicit mid-point scheme, with three time steps, is proposed to solve the inertial Landau-Lifshitz-Gilbert equation. This algorithm preserves the properties of magnetization dynamics, such as the energy decay and the constant length of magnetization and is proved to be second-order accurate in both space and time. In the convergence analysis, we first construct a second-order approximation $\underline{\m}$ of the exact solution. It is found that $\underline{\m}$ produces $\mathcal{O}(h^5)$ accuracy at the mesh points around the boundary sections, which simplifies the estimation at boundary points. Then, by analyzing the error function between the numerical solution and the constructed solution $\underline{\m}_h$, we derive the convergence result in the $H^1(\Omega_T)$ norm. Furthermore, a fixed-point iteration method is proposed to solve this implicit nonlinear scheme under the time-step restriction $k \leq Ch^2$. Numerical results confirm the theoretical analysis and clearly show the unique inertial effect in micromagnetics simulations.

\section*{Acknowledgments}
This work is supported in part by the grant NSFC 11971021 (J. Chen), the program of China Scholarships Council No. 202106920036 (P. Li), the grant NSF DMS-2012269 (C. Wang).
\bibliographystyle{amsplain}
\bibliography{references}

\end{document}